\newtheorem{theorem}{Theorem}
\newtheorem{proposition}{Proposition}
\newtheorem{lemma}{Lemma}
\newtheorem{corollary}{Corollary}
\newcommand{\ignore}[1]{}
\newcommand{\qed}{\hfill$\rule{2mm}{3mm}$}
\newenvironment{proof}{\par{\noindent \bf Proof:}}{\qed \par}
\newcommand{\E}{\mathrm{I\!E}}
\renewcommand{\P}{\mathrm{I\!P}}
\newsavebox{\traitbox}
\title{Convergence Speed of Binary Interval Consensus\thanks{A preliminary version without proofs of this work first appeared in \cite{DV10}}}
\author{Moez Draief\thanks{Imperial College London, SW7 2AZ London, UK (m.draief@imperial.ac.uk). } \and Milan Vojnovi\' c\thanks{Microsoft Research, J.J. Thomson Avenue, CB3 0FB Cambridge, UK, ( milanv@microsoft.com)}}
\begin{document}

\maketitle

\begin{abstract} We consider the convergence time for solving the binary consensus problem using the interval consensus algorithm proposed by B\' en\' ezit, Thiran and Vetterli (2009). In the binary consensus problem, each node initially holds one of two states and the goal for each node is to correctly decide which one of these two states was initially held by a majority of nodes. 

We derive an upper bound on the expected convergence time that holds for arbitrary connected graphs, which is based on the location of eigenvalues of some contact rate matrices. We instantiate our bound for particular networks of interest, including complete graphs, paths, cycles, star-shaped networks, and Erd\" os-R\' enyi random graphs; for these graphs, we  compare our bound with alternative computations. We find that for all these examples our bound is tight, yielding the exact order with respect to the number of nodes.

We pinpoint the fact that the expected convergence time critically depends on the voting margin defined as the difference between the fraction of nodes that initially held the majority and the minority states, respectively. The characterization of the expected convergence time yields exact relation between the expected convergence time and the voting margin, for some of these graphs, which reveals how the expected convergence time goes to infinity as the voting margin approaches zero.  

Our results provide insights into how the expected convergence time depends on the network topology which can be used for performance evaluation and network design. The results are of interest in the context of networked systems, in particular, peer-to-peer networks, sensor networks and distributed databases. 
\end{abstract}

\maketitle

\section{Introduction}

Algorithms for distributed computation in networks have recently attracted considerable interest because of their wide-range of applications in networked systems such as peer-to-peer networks, sensor networks, distributed databases, and on-line social networks. A specific algorithmic problem of interest is the so called \emph{binary consensus}~\cite{srikant,perron,benezit,nedic} where, initially, each node in the network holds one of two states and the goal for each node is to correctly decide which one of the two states was initially held by a majority of nodes. This is to be achieved by a decentralized algorithm where each node maintains its state based on the information exchanged at contacts with other nodes, where the contacts are restricted by the network topology. It is desired to reach a final decision by all nodes that is correct and within small convergence time. 

A typical application scenario of the binary consensus corresponds to a set of agents who want to reach consensus on whether a given event has occurred based on their individual, one-off collected, information. Such cooperative decision-making settings arise in a number of applications such as environmental monitoring, surveillance and security, and target tracking~\cite{mostofi08}, as well as voting in distributed systems \cite{Hardekopf}. Furthermore, it has been noted that one can use multiple binary consensus instances to solve multivalued consensuses; we refer to \cite{mostofi00,Zhang09} for an account on such algorithms. 

We consider a decentralized algorithm known as \emph{interval or quantized consensus} proposed by B\' en\' ezit, Thiran, and Vetterli~\cite{benezit}. The aim of this algorithm is to decide which one of $k\geq 2$ partitions of an interval contains the average of the initial values held by individual nodes. In this paper, we focus on \emph{binary} interval consensus, i.e. the case $k=2$. An attractive feature of the interval consensus is its accuracy; it was shown in~\cite{benezit} that for any finite connected graph that describes the network topology, the interval consensus is guaranteed to converge to the correct state with probability $1$. However, the following important question remained open: \emph{How fast does the interval consensus converge to the final state?} We answer this question for the case of binary interval consensus.

The interval consensus could be considered a state-of-the-art algorithm for solving the binary consensus problem as it guarantees convergence to the correct consensus (i.e. has zero probability of error) for arbitrary finite connected graphs. Besides, it only requires a limited amount of memory and communication by individual nodes (only four states). Some alternative decentralized algorithms require fewer states of memory or communication but fail to reach the correct consensus with strictly positive probability. For instance, the traditional voter model requires only two states of memory and communication. It is  however known that there are graphs for which the probability of error is a strictly positive constant, e.g. proportional to the number of nodes that initially held the minority state in the case of complete graphs (see \cite{peleg} for the general setting). Another example is the ternary protocol proposed in \cite{perron} for which it was shown that for complete graphs, the probability of error diminishes to zero exponentially with the number of nodes, but provides no improvement over the voter model for some other graphs (e.g. a path). 

In this paper, we provide an upper bound on the expected convergence time for solving the binary interval consensus on arbitrary connected graphs. This provides a unified approach for estimating the expected convergence time for particular graphs. The bound is tight in the sense that there exists a graph, namely the complete graph, for which the bound is achieved asymptotically for large number of nodes.
 
We demonstrate how the general upper bound can be instantiated for a range of particular graphs, including complete graphs, paths, cycles, star-shaped networks and Erd\"os-R\'enyi random graphs. Notice that the complete graph and the Erd\"os-R\'enyi random graph are good approximations of various unstructured and structured peer-to-peer networks and that star-shaped networks capture the scenarios where some node is a hub for other nodes. 

Our results provide insights into how the expected convergence time depends on \emph{the network structure} and \emph{the voting margin}, where the latter is defined as the difference between the fraction of nodes initially holding the majority state and the fraction of nodes initially holding the minority state. For the network structure, we found that the expected convergence time is determined by the spectral properties of some matrices that dictate the contact rates between nodes. For the voting margin, we found that there exist graphs for which the voting margin significantly affects the expected convergence time. 

\paragraph{Complete graph example} For concreteness, we describe how the voting margin affects the expected convergence time for the complete graph of $n$ nodes. Let us denote with $\alpha > 1/2$ the fraction of nodes that initially held the majority state, and thus $\alpha-(1-\alpha) = 2\alpha - 1$ is the voting margin. We found that that the convergence time $T$ satisfies
$$
\E(T)= \frac{1}{2\alpha-1}\log(n)(1+o(1)).
$$
Therefore, the expected convergence time is inversely proportional to the voting margin, and thus, goes to infinity as the voting margin goes to $0$. Hence, albeit the interval consensus guarantees convergence to the correct state, the expected convergence time can assume large values for small voting margins.

\paragraph{Outline of the Paper} In Section~\ref{sec:rel} we discuss the related work. Section~\ref{sec:pre} introduces the notation and the binary interval consensus algorithm considered in this paper. Section~\ref{sec:gen} presents our main result that consists of an upper bound on the expected convergence time that applies to arbitrary connected graphs (Theorem~\ref{thm:genbound}). Section~\ref{sec:app} instantiates the upper bound for particular graphs, namely complete graphs, paths, cycles, star-shaped networks and Erd\" os-R\' enyi random graphs, and compares with alternative analysis. We conclude in Section~\ref{sec:conc}. Some of the proofs are deferred to the appendix.

\section{Related Work}
\label{sec:rel}

In recent years there has been a large body of research on algorithms for decentralized computations over networks, under various constraints on the memory of individual nodes and communication between the nodes. For example, in the so called {\em quantized consensus} problem~\cite{srikant,Carli2010}, the goal is to approximately compute the mean of the values that reside at individual nodes, in a decentralized fashion, where nodes communicate quantized information. In \cite{nedic}, the authors provided bounds on the convergence time in the context of averaging algorithms where nodes exchange quantized information. 

The work that is most closely related to ours is \cite{benezit} where the authors showed that the so called interval consensus algorithm guarantees correctness for arbitrary finite connected graphs. In particular, their work shows that for solving the binary consensus problem, it suffices to use \emph{only two extra states} to guarantee convergence to the correct consensus in a finite time, for every finite connected graph. Our work advances this line of work by establishing the first tight characterizations of the expected convergence time for the binary interval consensus. 

Previous work on the binary consensus problem considered algorithms under more stringent assumptions on the number of states stored and communicated by individual nodes. The standard {\em voter model} is an algorithm where each node stores and communicates one of two states ($0$ or $1$), where each instigator node switches to the state observed from the contacted node. The voter model has been studied in the context of various graph topologies~\cite{donnelli,liggett,sood} and the probability of reaching the correct consensus (i.e. corresponding to the initial majority state) is known in closed-form for arbitrary connected graphs~\cite{peleg}. Specifically, the probability of reaching the correct consensus is proportional to the sum of degrees of the nodes that initially held the initial majority state. In particular, for the complete graphs, this means that the probability of reaching an incorrect consensus is proportional to the number of nodes that initially held the minority state. Moreover, for some network topologies, the convergence time of the voter model is known to be quadratic in the number of nodes, e.g. for a path~\cite{aldous}. In fact, It was shown in \cite{LB95} that it is impossible to solve the binary consensus problem without adding extra memory to encode the states of the nodes. Both~\cite{benezit} and \cite{MosselS10} show that adding one additional bit of memory is sufficient.

In \cite{perron}, the authors considered a ternary protocol for binary consensus problem where each node stores and communicates an extra state. It was shown that for the complete graph interactions, the probability of reaching the incorrect consensus is exponentially decreasing to $0$ as the number of nodes $n$ grows large, with a rate that depends on the voting margin. Moreover, if the algorithm converges to the right consensus, then the time it takes to complete is logarithmic in the number of nodes $n$, and is independent of the voting margin. Similar results have been derived in \cite{CG10} for the complete graph where, instead of using an extra state, each nodes polls more than one neighbour at a time and then chooses the opinion held by the majority of the polled neighbours. Notice that this is unlike to the binary interval consensus, for which we found that the expected convergence time, for the complete graph of $n$ nodes, is logarithmic in $n$, but with a factor that is dependent on the voting margin and going to infinity as the voting margin approaches zero. The main advantage of the binary interval consensus algorithm over both these protocols is the guaranteed convergence to the correct final state with probability $1$, albeit this seems to be at some expense with respect to the convergence time for some graphs.

Finally, we would like to mention that in a bigger picture, our work relates to the cascading phenomena that arise in the context of social networks \cite{kleinberg}; for example, in the viral marketing where an initial idea or behaviour held by a portion of the population, spreads through the network, yielding a wide adoption across the whole population~\cite{draief}.

\section{Algorithm and Notation}
\label{sec:pre}

In this section, we introduce the interval consensus algorithm for the binary consensus problem. Each node is assumed to be in one of the following four states $0$, $e_0$, $e_1$ and $1$, at every time instant. It is assumed that the states satisfy the following order relations $0 < e_0 < e_1 < 1$. Let us first describe a simple example to illustrate the updating protocol.
\paragraph{Example}  Assume that we have four nodes labelled $1,2,3,4$, forming a line network $1-2-3-4$, starting in state $(1,0,0,0)$. If the first interaction occurs between node $1$ and node $2$ then, as they disagree, both of them become undecided. More precisely, the state of node $1$ turns into $e_0$ indicating that she saw opinion $0$ and the state of node $2$ turns into $e_1$ indicating that she saw opinion $1$. The new vector of states becomes $(e_0,e_1,0,0)$. If the following interaction is between nodes $3$ and $4$ then nothing happens. If nodes $1$ and $2$ interact again then their states are swapped, i.e. the vector of states becomes $(e_1,e_0,0,0)$. Now suppose that nodes $2$ and $3$ interact then they swap their states, i.e. the vector of states becomes $(e_1,0,e_0,0)$. This transition indicates that node $2$ was undecided and saw opinion $0$ so she adopts $0$ whereas node $3$ saw an undecided node so she becomes undecided. If nodes $1$ and $2$ interact then the vector of states becomes $(e_0,0,e_0,0)$. This indicates that node $1$ saw an undecided node so becomes undecided whereas node $2$ was undecided so it adopted the opinion of node $1$. After this stage, the dynamics does not settle as if a node in $e_0$ and a node in $0$ interact then they swap their opinions but the number of nodes in each of the states $e_0$ and $0$ stays constant which is indicative of the fact that the initial majority held the opinion $0$.

We now describe the set of rules for updating the states of the nodes.
\paragraph{State update rules} The states held by the nodes are updated at pairwise contacts between nodes according to the following state update rules:
\begin{enumerate}
\item If a node in state $0$ and a node in state $1$ get in contact, they \emph{switch} their states to state $e_1$ and state $e_0$, respectively. 
\item If a node in state $e_0$ and a node in state $1$ get in contact, they  \emph{switch} their states to state $1$ and state $e_1$, respectively.
\item If a node in state $e_1$ and a node in state $0$ get in contact, they  \emph{switch} their states to state $0$ and state $e_0$, respectively.
\item If a node in state $e_0$ and a node in state $0$ get in contact, they \emph{swap} their states to state $0$ and $e_0$, respectively.
\item If a node in state $e_1$ and a node in state $1$ get in contact, they \emph{swap} their states to state $1$ and state $e_1$, respectively.
\item If a node in state $e_0$ and a node in state $e_1$ get in contact, they \emph{swap} their states to state $e_1$ and state $e_0$, respectively.
\end{enumerate}
For any other states of a pair of nodes that get in contact, their states remain unchanged. 
\paragraph{Temporal process of pairwise interactions} We admit the standard asynchronous communication model \cite{perron,boyd} where any pair of nodes $(i,j)$ interacts at instances of a Poisson process with rate $q_{i,j}\geq 0$. We denote with $V = \{1,2,\ldots,n\}$ the set of nodes. The interaction rates are specified by the matrix $Q=(q_{i,j})_{i,j\in V}$ assumed to be symmetric\footnote{We assume that $i$ contacts $j$ at a rate $p_{i,j}\geq 0$ so that $i$ and $j$ interact at a rate $q_{ij}=p_{ij}+p_{ji}$. Therefore, defining $P=(p_{ij})_{i,j\in V}$, we have $Q=P+P^T$ is symmetric matrix, i.e. $q_{i,j}=q_{j,i}$ for every $i,j\in V$ and $q_{i,i} = 0$ for every $i\in V$.}. The transition matrix $Q$ induces an undirected graph $G=(V,E)$ where there is an edge $(i,j)\in E$ if and only if $q_{i,j} > 0$. We assume that graph $G$ is connected.
\paragraph{Two convergence phases} The state update rules ensure that in a finite time, a final state is reached in which all nodes are either in state $0$ or state $e_0$ (state $0$ is initial majority). We distinguish two phases in the convergence to the final state, which will be a key step for our analysis of the expected convergence time that relies on separately analyzing the two phases. The two convergence phases are defined as follows: 
\begin{enumerate}
\item[Phase~1] (depletion of state $1$). This phase begins at the start of the execution of the algorithm and lasts until none of the nodes is in state $1$. Whenever a node in state $0$ and a node in state $1$ get in contact, they switch to states $e_1$ and $e_0$, respectively. It is therefore clear that the number of nodes holding the minority state (state $1$) decreases to $0$ in a finite amount of time and from that time onwards, the number of nodes in state $0$ remains equal to the difference of the (initial) number of nodes in state $0$ and state $1$. 
\item[Phase~2]  (depletion of state $e_1$). This phase follows the end of phase~1 and lasts until none of the nodes is in state $e_1$. In this phase, the number of nodes in state $e_1$ decreases following each contact between a node in state $e_1$ and a node in state $0$. Since, in this phase, no interaction between a pair of nodes results in increasing the number of nodes in state $e_1$, there are eventually no nodes in state $e_1$.
\end{enumerate} 
The duration of each of the two phases is ensured to be finite for arbitrary finite connected graphs by the definition of the state update rules where swapping of the states enables that state $1$ nodes get in contact with state $0$ nodes and similarly, enables that state $e_1$ nodes get in contact with state $0$ nodes.

%
 
\paragraph{Additional notation} We denote by $S_i(t)$ the set of nodes in state $i\in \{0,e_0,e_1,1\}$ at time $t$. With a slight abuse of notation, in some cases we will use the compact notation $|S_i|\equiv |S_i(0)|$, $i=0,1$, which should be clear from the context. We define $\alpha\in(1/2,1]$ as the fraction of nodes that initially hold state $0$, assumed to be the initial majority. Therefore, $|S_0|=\alpha n$ and $|S_1|=(1-\alpha)n$. 

\section{General Bound for the Expected Convergence Time}
\label{sec:gen}
In this section we present our main result that consists of an upper bound on the expected convergence time for arbitrary connected graphs. \\
The bound is in terms of eigenvalues of a set of matrices $Q_S$ that is defined using the transition matrix $Q$ as follows. Let $S$ be a non-empty subset of the set of vertices $V$ of size smaller than $n$ and let $S^c = V\setminus S$. We consider the matrix $Q_S=(q^S_{i,j})_{i,j\in V}$ that is derived from the contact rate matrix $Q$ as follows
\begin{equation}
q^S_{i,j} = \left\{
\begin{array}{ll}
-\sum_{l\in V} q_{i,l}, & i=j\\
q_{i,j}, & i\in S^c, j\neq i\\
0, & i\in S, j\neq i.
\end{array}
\right .
\label{equ:QS}
\end{equation} 
We first establish that eigenvalues of the matrices $Q_S$, for $S\subset V$ non-empty, are strictly negative. This will be a key property that ensures finiteness of our bound which we present later in this section.\\

\begin{lemma} For every finite graph $G$, we define $\delta(Q,\alpha)$ as follows 
\begin{eqnarray}\nonumber
\delta(Q,\alpha)&=&\min_{S\subset V, \frac{|S|}{n}\in[2\alpha-1, \alpha]} |\lambda_{Q_S}|\\ \label{Cauchy interlacement}
&=&\min_{S\subset V, |S|=(2\alpha-1)n} |\lambda_{Q_S}|\:.
\end{eqnarray}
where $\lambda_{Q_S}$ is the largest eigenvalue of $Q_S$. Then, for all $S$ non-empty, $\lambda_{Q_S}<0$ and thence $\delta(Q,\alpha)>0$.
\label{lem:spec}
\end{lemma}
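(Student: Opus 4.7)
The plan is to reduce the spectral question about $Q_S$ to a positive-definiteness statement about its symmetric $S^c$-block. After reordering coordinates so that vertices of $S$ precede those of $S^c$, the rows indexed by $i\in S$ have all off-diagonal entries equal to zero, so $Q_S$ becomes block lower triangular,
$$
Q_S \;=\; \begin{pmatrix} D_S & 0 \\ B & M_S \end{pmatrix},
$$
where $D_S=\mathrm{diag}\bigl(-\sum_{l\in V}q_{i,l}\bigr)_{i\in S}$ and $M_S$ is the principal submatrix of $Q_S$ indexed by $S^c$. The spectrum of $Q_S$ is therefore the union of the spectra of $D_S$ and $M_S$. The eigenvalues of $D_S$ are strictly negative because $G$ is connected and $n\geq 2$, so $\sum_{l\in V}q_{i,l}>0$ for every $i$. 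The task reduces to showing that every eigenvalue of $M_S$ is strictly negative as well.

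The key step is to exploit the fact that $M_S$ inherits symmetry from $Q$ and admits the decomposition
$$
-M_S \;=\; L_{S^c}+D_\partial,
$$
where $L_{S^c}$ is the weighted graph Laplacian of the subgraph of $G$ induced by $S^c$, and $D_\partial$ is the diagonal matrix with $(D_\partial)_{i,i}=\sum_{l\in S}q_{i,l}$ for $i\in S^c$. Both summands are positive semidefinite. To upgrade to positive definiteness, I would suppose $v$ is a vector on $S^c$ with $v^\top(-M_S)v=0$. Then $v^\top L_{S^c}v=\tfrac{1}{2}\sum_{i,j\in S^c}q_{i,j}(v_i-v_j)^2=0$ forces $v$ to be constant on every connected component of the subgraph of $G$ induced by $S^c$, while $v^\top D_\partial v=\sum_{i\in S^c}\bigl(\sum_{l\in S}q_{i,l}\bigr)v_i^2=0$ forces $v_i=0$ at every $i\in S^c$ adjacent in $G$ to some vertex of $S$. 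Since $G$ is connected and $S$ is non-empty, every connected component of the induced subgraph on $S^c$ contains at least one such boundary vertex, so $v$ vanishes identically. Thus $-M_S$ is positive definite, $\lambda_{M_S}<0$, and hence $\lambda_{Q_S}<0$.

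To justify the second equality in the definition of $\delta(Q,\alpha)$, I would invoke Cauchy's interlacing theorem on the symmetric block $M_S$: if $S\subset S'$, then $M_{S'}$ is a principal submatrix of $M_S$ (the diagonal entries coincide, both equal $-\sum_{l\in V}q_{i,l}$), so the largest eigenvalue of $M_{S'}$ is at most that of $M_S$, i.e.\ $|\lambda_{Q_{S'}}|\geq|\lambda_{Q_S}|$. Consequently, for any $S$ in the range $|S|/n\in[2\alpha-1,\alpha]$ one may pick an arbitrary subset $\tilde S\subset S$ of size $(2\alpha-1)n$ and obtain $|\lambda_{Q_{\tilde S}}|\leq|\lambda_{Q_S}|$, so the minimum is attained at the lower endpoint; being a minimum of finitely many strictly positive numbers, $\delta(Q,\alpha)>0$. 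The main obstacle is the positive-definiteness step: the Laplacian-plus-boundary-diagonal decomposition cleanly separates the two obstructions to triviality of the null space and lets connectedness of $G$ close the argument.
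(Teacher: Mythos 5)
Your proof is correct and follows essentially the same route as the paper: the block-triangular reduction to the symmetric matrix $M_S$, the identity $v^\top(-M_S)v=\sum_{i\in S^c,j\in S}q_{i,j}v_i^2+\tfrac{1}{2}\sum_{i,j\in S^c}q_{i,j}(v_i-v_j)^2$ (your Laplacian-plus-boundary decomposition), and the connectivity argument ruling out a nontrivial kernel are all exactly the paper's steps, merely phrased in positive-definiteness language. Your explicit interlacing argument for the second equality is also the one the paper invokes by citation.
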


Note that identity (\ref{Cauchy interlacement}) is a direct consequence of the Cauchy interlacing theorem for principal submatrices of orthogonal matrices \cite[Theorem 4.3.8, p. 185]{Horn90}.

We next present our main result that establishes an upper bound on the expected convergence time that holds for arbitrary connected graphs. Before stating the result, notice that at the end of phase~1 none of the nodes are in state $1$, $(2\alpha-1)n$ nodes are in state $0$, and the remaining $2(1-\alpha)n$ nodes are in either state $e_0$ or state $e_1$. At the end of phase~2, there are exactly $(2\alpha-1)n$ nodes in state $0$ and $2(1-\alpha) n$ nodes in state $e_0$. The following theorem establishes a general bound for the expected duration of each convergence phases in terms of the number of nodes $n$ and the parameter $\delta(Q,\alpha)$, which we introduced in Lemma~\ref{lem:spec}.\\

\begin{theorem} Let $T_1$ be the first instant at which all the nodes in state $1$ are depleted. Then, 
$$
\E(T_1) \leq \frac{1}{\delta(Q,\alpha)}(\log n + 1).
$$ 
Furthermore, letting $T_2$ be the time for all the nodes in state $e_1$ to be depleted, starting from an initial state with no nodes in state $1$, we have
$$
\E(T_2) \leq \frac{1}{\delta(Q,\alpha)}(\log n + 1).
$$ 
In particular, if $T$ is the  first instant at which none of the nodes is in either state $e_1$ or state $1$, then
$$
\E(T)\leq \frac{2}{\delta(Q,\alpha)}(\log n + 1).
$$
\label{thm:genbound}
\end{theorem}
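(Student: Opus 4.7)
The plan is to establish the three claimed inequalities by the common route of bounding a ``survivor count'' $N(t)$---equal to $|S_1(t)|$ in Phase~1 and to $|S_{e_1}(t)|$ in Phase~2---by an exponentially decaying expectation
$$
\E[N(t)] \leq n\, e^{-\delta(Q,\alpha)\, t}.
$$
Since $N$ is a non-negative integer and $T_i$ is the first time $N$ hits $0$, Markov's inequality yields $\P[T_i>t]=\P[N(t)\geq 1]\leq\min(1,\E[N(t)])$, and splitting the integral at $t^\star := (\log n)/\delta(Q,\alpha)$,
$$
\E(T_i) = \int_0^\infty \P[T_i > t]\, dt \leq t^\star + \int_{t^\star}^\infty n\, e^{-\delta(Q,\alpha)\, t}\, dt = \frac{\log n + 1}{\delta(Q,\alpha)}.
$$
The bound on $T$ then follows from $T = T_1 + T_2'$, where $T_2'$ is the Phase~2 duration begun at the (random) configuration reached at time $T_1$; since the Phase~2 bound is to be established uniformly in the starting configuration with $|S_1|=0$, the strong Markov property gives $\E(T)\leq 2(\log n+1)/\delta(Q,\alpha)$.

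The combinatorial crux for Phase~1 is that only rule~1 alters $|S_0|$ or $|S_1|$, and it does so in lockstep, so $|S_0(t)|-|S_1(t)| = (2\alpha-1)n$ is conserved and $|S_0(t)|\geq (2\alpha-1)n$ throughout Phase~1. Viewing each state-$1$ node as a ``particle'', the update rules say that when edge $(i,j)$ fires at rate $q_{i,j}$ with a particle at $j$, the particle \emph{dies} if $i\in S_0$ (rule~1), \emph{migrates} to $i$ if $i\in S_{e_0}\cup S_{e_1}$ (rules~2 and~5), and is unaffected if $i\in S_1$. Phase~2 admits the same description with state-$e_1$ nodes as particles (rule~3 kills, rule~6 migrates) and with $|S_0|$ stuck at exactly $(2\alpha-1)n$.

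The plan is to dominate this particle process by a system of at most $n$ \emph{independent} continuous-time random walkers on $V$ with jump rates $q_{i,j}$, each killed on first entry of a fixed absorbing set $S\subseteq V$ with $|S|=(2\alpha-1)n$: independence removes mutual blocking, and fixing $S$ (weakly) inside the true absorbing region only lessens the killing, so this system stochastically dominates the actual survivor count. A single walker's distribution on $S^c$ evolves by $\dot{p}=Ap$ with the symmetric matrix $A=(Q-D)|_{S^c\times S^c}$, and the block upper-triangular form of $Q_S$ (rows indexed by $S$ contain only the diagonal entry $-D_i$) identifies $\lambda_{\max}(A)=\lambda_{Q_S}=-\delta_S$. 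Aggregating over all initial particles into $u(0)=\mathbf{1}_{S_1(0)}$ and applying Cauchy--Schwarz to the total surviving mass $\mathbf{1}^\top u(t)=u(0)^\top e^{tA}\mathbf{1}$ with $\|\mathbf{1}\|_2\leq\sqrt n$ and $\|u(0)\|_2\leq\sqrt{|S_1(0)|}\leq\sqrt n$ yields $\E[N(t)]\leq n\,e^{-\delta_S t}$. Since $\delta_S\geq\delta(Q,\alpha)$ by definition, the desired bound follows.

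The main obstacle is making this domination rigorous: although $|S_0(t)|\geq(2\alpha-1)n$, the set $S_0(t)$ itself fluctuates (via rules~3 and~4) and no static $S$ remains inside $S_0(t)$ for all $t$; the same issue arises in Phase~2. One must therefore either (i) implement a time-dependent coupling that reselects the fixed absorbing set at each change of $S_0(t)$, bounding the decay uniformly by the minimum over admissible $S$, or (ii) replace the coupling by a Lyapunov argument using the functional $F(t)=\sum_{j\in S_1(t)}v_j$ with $v\geq 0$ the Perron eigenvector of $Q_S$ extended by zero on $S$, and carefully control the extra terms generated when $S_0(t)\neq S$. Either route explains why $\delta(Q,\alpha)$ must be the \emph{minimum} of $|\lambda_{Q_S}|$ over $|S|=(2\alpha-1)n$: it is the worst-case absorbing configuration consistent with $|S_0|\geq(2\alpha-1)n$ that the proof must uniformly handle.
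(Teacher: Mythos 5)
Your outer scaffolding---bounding $\E|S_1(t)|$ (resp.\ $\E|S_{e_1}(t)|$) by $n\,e^{-\delta(Q,\alpha)t}$, applying Markov's inequality to get $\P(T_i>t)$, splitting the integral at $\log n/\delta(Q,\alpha)$, and concatenating the two phases via the strong Markov property---is exactly the paper's, and that part is fine. The gap is that the exponential decay estimate itself, which is the entire content of the theorem, is not established: you reduce it to a stochastic domination of the interacting particle system by independent random walkers killed on a \emph{fixed} absorbing set $S$, observe yourself that no fixed $S$ stays inside the moving set $S_0(t)$, and then leave the repair as a choice between two unexecuted strategies. Moreover, even for a fixed $S$ the domination claim is not obvious: the state-$1$ (resp.\ state-$e_1$) particles evolve by exclusion-type dynamics (a particle migrates only onto an $e$-node, and nothing happens when two particles meet), and it is not clear that removing the blocking can only increase the survivor count---extra mobility could just as well drive particles into the killing set faster. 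So the proposal as written does not contain a proof of the key inequality.

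The paper avoids couplings altogether. It writes the exact forward equations for the indicator expectations $\E(A_i(t))$ (and $\E(B_i(t))$ in phase~2) and observes that, conditioned on the piecewise-constant trajectory $S_0(t)=S_k$ on $[t_k,t_{k+1})$, these first moments satisfy the \emph{linear} ODE $\frac{d}{dt}\E_k(A(t))=Q_{S_k}\E_k(A(t))$; iterating over the random switching times and using the strong Markov property gives $\E(A(t))=\E[e^{\lambda(t)}A(0)]$ with $\lambda(t)=Q_{S_k}(t-t_k)+\sum_{l<k}Q_{S_l}(t_{l+1}-t_l)$. Each factor $\|e^{Q_{S_l}(t_{l+1}-t_l)}\|$ is then bounded by $e^{-\delta(Q,\alpha)(t_{l+1}-t_l)}$ via Lemma~\ref{lem:spec} and $|S_l|\ge(2\alpha-1)n$, so the product telescopes to $\|\E(Y(t))\|_2\le\sqrt{n}\,e^{-\delta(Q,\alpha)t}$, and Cauchy--Schwarz gives $\sum_i\E(Y_i(t))\le n\,e^{-\delta(Q,\alpha)t}$. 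This is in effect your option (i), but carried out at the level of conditional first moments rather than sample-path couplings, which is precisely what renders the fluctuation of $S_0(t)$ harmless: the bound holds uniformly over whichever admissible sets $S_l$ occur, and this is also why $\delta(Q,\alpha)$ is defined as a minimum over $S$. If you want to complete your write-up, the moment-equation route is the one to follow; the independent-walker domination would require a separate and nontrivial monotonicity argument.
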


It is worth noting that the above theorem holds for every positive integer $n$ and not just asymptotically in $n$.

The proof of the theorem is presented in Section~\ref{sec:pro} and here we outline the main ideas. The proof proceeds by first separately considering the two convergence phases. For phase~1, we characterize the evolution over time of the probability that a node is in state $1$, for every node $i\in V$. This amounts to a ``piecewise'' linear dynamical system. Similarly, for phase~2, we characterize the evolution over time of the probability that a node is in state $e_1$, for every given node $i\in V$, and show that this also amounts to a  ``piecewise'' linear dynamical system. The proof is then completed by using a spectral bound on the expected number of nodes in state $1$, for phase $1$, and in state $e_1$, for phase~2, which is then used to establish the asserted results.
\paragraph{Tightness of the bounds} The bound for the convergence time of the first phase asserted in Theorem~\ref{thm:genbound} is tight in the sense that there exist graphs for which the asymptotically dominant terms of the expected convergence time and the corresponding bound are either equal (complete graph) or equal up to a constant factor (star network). The bound for the second phase is not tight as we prove an upper bound for it using the worst case for an initial configuration for the start of phase $2$. \\
In what follows we provide proofs for Lemma~\ref{lem:spec} and Theorem~\ref{thm:genbound}.
\subsection{Proof of Lemma~\ref{lem:spec}}
Let $S$ be a non-empty subset of $V$. First, for the trivial case $S=V$ the matrix $Q_S$ is diagonal with diagonal elements $(-\sum_{j\in V} q_{ij})_{i\in V}$ which are all negative since the graph is connected (each node has at least one neighbour). Now let $S$ such that $|S|<n$. Note that every eigenvalue $\lambda$ and the associated eigenvector $\vec{x}$ of the matrix $Q_S$ satisfy the following equations
\begin{equation}
\begin{array}{ll}
\lambda x_i = -q_i x_i, & \hbox{ for } i\in S\\
\lambda x_i = - q_i x_i + \sum_{l\in V} q_{i,l}x_l, & \hbox{ for } i \in S^c
\end{array}
\label{equ:lambda}
\end{equation}
where $q_i := \sum_{l\in V} q_{i,l}$, for every $i\in V$.\\
On the one hand, it is clear from the form of the matrices $Q_S$, given by (\ref{equ:QS}), that for every $i\in S$, $\lambda = -q_i$ is an eigenvalue of $Q_S$. Since by assumption, the transition matrix $Q$ induces a connected graph $G$, we have that for every $i\in V$ there exists a $j\in V$ such that $q_{i,j} > 0$. Hence, it follows that $\lambda < 0$. \\
On the other hand, if $\lambda \neq -q_i$ for any $i\in S$, it is clear from (\ref{equ:lambda}) that $x_l = 0$ for every $l\in S$. In fact, since $Q$ is symmetric, the remaining eigenvalues of $Q_S$ are the eigenvalues of the symmetric matrix $M_S=(m^S_{i,j})_{i,j\in S^c}$ defined by
$$
m^S_{i,j} = \left\{
\begin{array}{ll}
-\sum_{l\in V} q_{i,l}, & i=j\in S^c\\
q_{i,j}, & i,j\in S^c, j\neq i
\end{array}
\right .
$$
Let $\lambda$ be such an eigenvalue of $Q_S$ and let $\vec{x}$ be the corresponding eigenvector and, without loss of generality, assume that $||\vec{x}||_2^2=\sum_{i\in V} x_i^2=\sum_{i\in S^c} x_i^2=1$.  Note that $\lambda=\lambda\vec{x}^T\vec{x}=\vec{x}^TQ\vec{x}$. Since $Q$ is symmetric, we have
\begin{eqnarray}
-\lambda&=&\sum_{i\in S^c,j\in V} q_{i,j} x_i^2 +\sum_{i,j\in S^c}q_{i,j} x_ix_j\nonumber\\
&=&\sum_{i\in S^c, j\in S} q_{i,j} x_i^2 -\sum_{i,j\in S^c}q_{i,j} x_i(x_i-x_j)\nonumber\\
&=&\sum_{i\in S^c,j\in S} q_{i,j} x_i^2 -\frac{1}{2}\sum_{i,j\in S^c}q_{i,j} (x_i-x_j)^2.\label{equ:quad}
\end{eqnarray}
Therefore, it is clear that $\lambda\leq 0$ with $\lambda= 0$ only if
$$
\sum_{i\in S^c,j\in S} q_{i,j} x_i^2 +\frac{1}{2}\sum_{i,j\in S^c}q_{i,j} (x_i-x_j)^2=0.
$$
Let $W\subset S^c$ be such that $x_i\neq 0$, for $i\in W$, and $x_i = 0$, for $i\in S^c \setminus W$. Since $\vec{x}$ is an eigenvector, then $W$ is non empty. If $\lambda=0$, then 
$$
\sum_{i\in W,j\in S} q_{i,j} x_i^2 +\sum_{i\in W,j\in S^c\setminus W}q_{i,j} x_i^2+\frac{1}{2}\sum_{i,j\in W} q_{i,j}(x_i-x_j)^2=0.
$$
The above implies that there are no edges between $S$ and $W$, and that there are no edges between $W$ and $S^c\setminus W$, i.e. $W$ is an isolated component, which is a contradiction since $Q$ corresponds to a connected graph. Therefore, $\lambda<0$, which proves the lemma.
\subsection{Proof of Theorem~\ref{thm:genbound}}\label{sec:pro}
We first separately consider the two convergence phases and then complete with a step that applies to both phases.
\paragraph{Phase~1: Depletion of nodes in state $1$} We describe the dynamics of the first phase through the following indicators of node states. Let $Z_i(t)$ and $A_i(t)$ be the indicators that node $i$ is in state $0$ and $1$ at time $t$, respectively. The indicator of being in either state $e_0$ or state $e_1$ at time $t$ is encoded by $A_i(t)=Z_i(t)=0$. The system state evolves according to a continuous-time Markov process $(Z(t),A(t))_{t\geq 0}$, where $A(t)=(A_i(t))_{i\in V}$ and $Z(t)=(Z_i(t))_{i\in V}$, with the transition rates given as follows
$$
(Z,A) \rightarrow \left\{
\begin{array}{lll}
(Z-\epsilon_i,A-\epsilon_j) & \mbox{with rate} &   q_{i,j} Z_i A_j\\
(Z-\epsilon_i+\epsilon_j,A) & \mbox{with rate} &   q_{i,j} Z_i (1-A_j-Z_j)\\
(Z,A-\epsilon_i+\epsilon_j) & \mbox{with rate} &   q_{i,j} A_i (1-A_j-Z_j)
\end{array}
\right .
$$
where $i,j\in V$ and $\epsilon_i$ is the $n$-dimensional vector whose elements are all equal to $0$ but the $i$-th element that is equal to $1$.\\
Since $Q$ is a symmetric matrix, we have for every $i\in V$ and $t\geq 0$,
\begin{eqnarray*}
\frac{d}{dt}\E(A_i(t))&=&-\sum_{j\in V} q_{i,j} \E(A_i(t) Z_j(t)) 
 - \sum_{j\in V} q_{i,j}\E\left(A_i(t)(1-A_j(t)-Z_j(t))\right)\\
&&+\sum_{j\in V} q_{i,j}\E\left(A_j(t)(1-A_i(t)-Z_i(t))\right)\\
\end{eqnarray*}
or, equivalently,
\begin{eqnarray*}
\frac{d}{dt}\E(A_i(t)) &=&-\left(\sum_{l\in V} q_{i,l}\right)\E(A_i(t)) + \sum_{j\in V} q_{i,j}\E\left(A_j(t)(1-Z_i(t))\right).
\end{eqnarray*}
Let us now consider the behaviour of the set $S_0(t)$ of nodes in state $0$, i.e. $S_0(t)=\{i\in V: Z_i(t)=1\}$. From the above dynamics, we see that there are intervals $[t_k,t_{k+1})$ during which the set $S_0(t)$ does not change (the instants $t_k$ are stopping times of the Markov chain describing the evolution of the algorithm). Let $S_k\subset V$ be the set of nodes in state $0$, for $t\in [t_k,t_{k+1})$, and let $S_k^c = V\setminus S_k$, i.e. $S_0(t)=S_k$ and $V\setminus S_0(t) = S_k^c$, for $t\in[t_k,t_{k+1})$. We then can write, for $t\in[t_k,t_{k+1})$,
\begin{eqnarray}
\frac{d}{dt}\E_k(A_i(t))= -\left(\sum_{l\in V} q_{i,l}\right)\E_k(A_i(t))+\left\{
\begin{array}{ll}
\sum_{j\in V} q_{i,j}\E_k\left(A_j(t)\right), & i \in S_k^c  \\ 
0,& i\in S_k
\end{array}
\right.
\end{eqnarray}
where $\E_k$ is the expectation conditional on the event $\{S_0(t)=S_k\}$. In a matrix form, this gives
$$
\frac{d}{dt}\E_k(A(t))=Q_{S_k}\E_k(A(t)), \hbox{ for } t_k \leq t < t_{k+1},
$$
where $Q_{S_k}$ is given by (\ref{equ:QS}).\\
Solving the above differential equation, we have
$$\E_k(A(t))=e^{Q_{S_k}(t-t_k)} \E_k(A(t_k)), \hbox{ for } t_k \leq t < t_{k+1}.
$$
Using the strong Markov property, it is not difficult to see that
$$
\E(A(t))=\E\left[e^{\lambda(t)}A(0)\right], \hbox{ for } t\geq 0,
$$
where
$$
\lambda(t) = Q_{S_k}(t-t_k)+\sum_{l=0}^{k-1}Q_{S_l}(t_{l+1}-t_l), \hbox{ for } t_k \leq t < t_{k+1}.
$$
Note that $\lambda(t) $ is a random matrix that depends on the stopping times $t_k$.
\paragraph{Phase~2: Depletion of nodes in state $e_1$} To describe the dynamics in the second phase, let $B_i(t)$ be the indicator that a node $i\in V$ is in state $e_1$ at time $t$. The notation $Z_i(t)$ has the same meaning as in phase~1, thus $Z_i(t)$ is the indicator that node $i\in V$ is in state $0$ at time $t$. The indicator that a node $i\in V$ is in state $e_0$ at time $t$ is encoded by $B_i(t)=Z_i(t)=0$. \\
The dynamics in this phase reduces to a continuous-time Markov process $(Z(t),B(t))_{t\geq 0}$, where $Z(t)=(Z_i(t))_{i\in V}$ and $B(t)=(B_i(t))_{i\in V}$, with the transition rates given as follows, for $i,j\in V$,
$$
(Z,B) \rightarrow \left\{
\begin{array}{lll}
(Z-\epsilon_i+\epsilon_j,B-\epsilon_j) & \mbox{ with rate } &   q_{i,j} Z_i B_j\\
(Z-\epsilon_i+\epsilon_j,B) & \mbox{ with rate }&   q_{i,j} Z_i (1-B_j-Z_j)\\
(Z,B-\epsilon_i+\epsilon_j) & \mbox{ with rate }&   q_{i,j} B_i (1-B_j-Z_j).
\end{array}
\right .
$$
From this, we have for every $i\in V$ and $t\geq 0$,
\begin{eqnarray*}
\frac{d}{dt}\E(B_i(t))&=&-\sum_{i\in V} q_{i,j} \E(B_i(t) Z_j(t)) - \sum_{j\in V} q_{i,j}\E\left(B_i(t)(1-Z_j(t)-B_j(t))\right)\\
&&+\sum_{j\in V} q_{i,j}\E\left(B_j(t)(1-Z_i(t)-B_i(t))\right).
\end{eqnarray*}
Therefore, for every $i\in V$ and $t\geq 0$,
\begin{eqnarray*}
\frac{d}{dt}\E(B_i(t))
&=&-\left(\sum_{l\in V} q_{i,l}\right)\E(B_i(t)) + \sum_{j\in V} q_{i,j}\E\left(B_j(t)(1-Z_i(t))\right).
\end{eqnarray*}
Similar to the first phase, we see that there are intervals $[t'_k,t'_{k+1})$ during which the set $S_0(t)$ does not change (the instants $t'_k$ are stopping times). Let $S'_k$ be such that $S_0(t)=S'_k$, for $t\in[t'_k,t'_{k+1})$. Similarly to the first phase, we have
$$
\E(B(t)) = \E\left[e^{\lambda'(t)} B(t'_0)\right], \hbox{ for } t\geq 0,
$$
where $\lambda'(t)$ is a random matrix given by
$$
\lambda'(t) = Q_{S'_k}(t-t'_k)+\sum_{l=0}^{k-1}Q_{S'_l}(t'_{l+1}-t'_l), \hbox{ for } t_k'\leq t < t'_{k+1}.
$$
Note that $t'_0=T_1$ is the instant at which phase 2 starts (phase 1 ends).
\paragraph{Duration of a phase} In both phases, the process of interest is of the form 
$$
\E(Y(t)) = \E\left[e^{\lambda(t)} Y(0)\right], \hbox{ for } t\geq 0,
$$
where for a (random) positive integer $m > 0$ and a sequence $0=t_0\leq t_1 \leq \cdots \leq t_m$, we have  
$$
\lambda(t) = Q_{S_k}(t-t_k)+\sum_{l=0}^{k-1}Q_{S_l}(t_{l+1}-t_l), \hbox{ for } t_k\leq t<t_{k+1},\ k = 0,1,\ldots,m-1.
$$
For phase~1, $Y(t)\equiv A(t)$  while for phase~2, $Y(t)\equiv B(t)$. Using techniques as in \cite[Chapter 8]{DM10}, we have for or every $t\geq 0$,
\begin{eqnarray*}
 ||\E(Y(t))||_2
\leq  \E\left[\left|\left|e^{\lambda(t)}Y(0)\right|\right|_2 \right]
&\leq & \E\left[\left|\left|e^{\lambda(t)}\right|\right|\: ||Y(0)||_2\right]\\
&\leq &\E\left[||e^{Q_{S_k}(t-t_k)}|| \prod_{l=0}^{k-1} ||e^{Q_{S_l}(t_{l+1}-t_l)}|| \:||Y(0)||_2\right]\\
&\leq & e^{-\delta(Q,\alpha) t}\E\left(||Y(0)||_2\right)\leq \sqrt{n}\: e^{-\delta(Q,\alpha) t}\:
\end{eqnarray*}
where $||\cdot||$ denotes the matrix norm associated to the Euclidean norm $||\cdot||_2$. In the above, we used Jensen's inequality in the first inequality, followed by the property of matrix norms for the second and third inequalities, then Lemma~\ref{lem:spec} and finally the fact that $Y$ is a $n$-dimensional vector with elements taking values in $\{0,1\}$. \\
Furthermore, combining with Cauchy-Schwartz's inequality, we have
\begin{eqnarray*}
\sum_{i\in V} \E(Y_i(t))\leq ||\E(Y(t))||_2\: ||{\bf 1}||_2
\leq n\:e^{-\delta(Q,\alpha) t}, \hbox{ for every } t \geq 0.
\end{eqnarray*}
where  ${\bf 1}=(1,\dots,1)^T$. Therefore, we have
\begin{eqnarray*}
\P(Y(t)\neq{\bf 0}) \leq \sum_{i\in V} \E(Y_i(t))
\leq  n\:e^{-\delta(Q,\alpha) t},\ \hbox{ for every } t \geq 0.
\end{eqnarray*}
Let $T_0$ be the time at which $Y(t)$ hits ${\bf 0}=(0,\dots,0)^T$, which corresponds to $T_1$ for the process $A(t)$ and $T_2$ for the process $B(t)$. Then, we have 
\begin{eqnarray*}
\E(T_0)=\int_0^{\infty}\P(T_0>t) dt
&=& \int_0^{\infty}\P(Y(t)\neq {\bf 0}) dt\\
 &\leq & \frac{\log(n)}{\delta(Q,\alpha)}+n\:\int_{\frac{\log(n)}{\delta(Q,\alpha)}}^{\infty}e^{-\delta(Q,\alpha)  t}dt\\
 &=&\frac{\log(n)+1}{\delta(Q,\alpha)}\:
\end{eqnarray*}
which completes the proof of the theorem.
\section{Application to Particular Graphs}
\label{sec:app}
In this section we instantiate the bound of Theorem~\ref{thm:genbound} for particular networks including complete graphs, paths, cycles, star-shaped networks and Erd\" os-R\' enyi random graphs. For all these cases, we compare with alternative computations and find that our bound is of exactly the same order as the expected convergence time with respect to the number of nodes. For the complete graph, we also examine the expected convergence time as the voting margin goes to zero.

\subsection{Complete Graph}
\label{sse:complete}
We consider the complete graph of $n > 1$ nodes where each edge $e\in E$ is activated at instances of a Poisson process with rate $1/(n-1)$, i.e. we have $q_{i,j}=1/(n-1)$ for all $i,j\in V$ such that $i\neq j$.\\

\begin{lemma} For the complete graph of $n > 1$ nodes and every fixed $\alpha \in (1/2,1]$, we have
$$
\delta(Q,\alpha) \geq (2\alpha-1).
$$
\label{lem:comp1}
\end{lemma}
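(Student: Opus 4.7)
The plan is to compute $\lambda_{Q_S}$ explicitly for the complete graph. Since all off-diagonal entries of $Q$ equal $1/(n-1)$, the row sums are $\sum_{l \neq i} q_{i,l} = 1$, so every diagonal entry of $Q_S$ equals $-1$. Rows of $Q_S$ indexed by $S$ have $0$ off the diagonal, while rows indexed by $S^c$ have $1/(n-1)$ off the diagonal. This structure is symmetric and independent of which subset $S$ of size $(2\alpha-1)n$ we pick, so all matrices $Q_S$ with $|S|=(2\alpha-1)n$ are similar up to permutation of coordinates and share the same spectrum.

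Following the decomposition used in the proof of Lemma~\ref{lem:spec}, the eigenvalues of $Q_S$ split into two groups: the trivial eigenvalues $-q_i=-1$ of multiplicity $|S|$ coming from rows indexed by $S$, and the eigenvalues of the principal submatrix $M_S$ on $S^c$. Let $k=|S^c|=2(1-\alpha)n$. Then I would write
$$
M_S = -I_k + \frac{1}{n-1}(J_k - I_k) = -\frac{n}{n-1}\,I_k + \frac{1}{n-1}\,J_k,
$$
where $J_k$ is the $k\times k$ all-ones matrix.

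The spectrum of $J_k$ is well known: eigenvalue $k$ with multiplicity one (eigenvector $\mathbf{1}$) and eigenvalue $0$ with multiplicity $k-1$. Hence $M_S$ has eigenvalues $-n/(n-1)$ with multiplicity $k-1$ and
$$
\frac{-n+k}{n-1} = \frac{(1-2\alpha)n}{n-1}
$$
with multiplicity one. Since $\alpha\in(1/2,1]$ and $n\geq 2$, the second eigenvalue lies in $[-n/(n-1),0)$, so it is the largest (least negative) eigenvalue of $M_S$; comparing with $-1$ from the trivial block, it is also the largest eigenvalue of $Q_S$ as a whole (for $\alpha<1$ we have $(1-2\alpha)n/(n-1) > -1$, and for $\alpha=1$ the two coincide).

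Therefore
$$
|\lambda_{Q_S}| = \frac{(2\alpha-1)n}{n-1} \geq 2\alpha-1,
$$
and taking the minimum over $S$ (which is trivial by symmetry of the complete graph) yields $\delta(Q,\alpha)\geq 2\alpha-1$, as claimed. There is no real obstacle here: the only subtlety is to notice that the largest eigenvalue of $Q_S$ comes from the one-dimensional $\mathbf{1}$-eigenspace of $J_k$ and dominates the $-1$ contributions, which is immediate from $\alpha\leq 1$.
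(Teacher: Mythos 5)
Your proof is correct and follows essentially the same route as the paper: both split the spectrum of $Q_S$ into the trivial eigenvalue $-1$ of multiplicity $|S|$ and the eigenvalues of $M_S=-\frac{n}{n-1}I+\frac{1}{n-1}J$ on $S^c$, identifying $-|S|/(n-1)$ as the decisive eigenvalue (the paper exhibits the indicator of $S^c$ as its eigenvector rather than invoking the spectrum of $J_k$, but this is the same computation). One harmless slip: your claim that $(1-2\alpha)n/(n-1)>-1$ whenever $\alpha<1$ is not literally true for all $n$; what saves it is that a proper subset has $|S|\le n-1$, so $-|S|/(n-1)\ge -1$, and in every case $|\lambda_{Q_S}|=\min\left(1,\frac{|S|}{n-1}\right)\ge 2\alpha-1$, so the conclusion stands.
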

\begin{proof} For the complete graph, the matrix $Q_S$ is as follows
$$
q^S_{i,j} = \left\{
\begin{array}{rl}
- 1, & i=j\\
\frac{1}{n-1}, & i\in S^c, j \neq i\\
0, & i\in S, j \neq i.
\end{array}
\right .
$$
First of all $-1$ is an eigenvalue of order $|S|$. In addition it is not difficult to see that the vector $\vec{x}$ such that $x_i=0$ for $i\in S$ and $x_i=1$ for $i\in S^c$ is an eigenvector of matrix $Q_S$ with the eigenvalue $-\frac{|S|}{n-1}$. Since in each of the two convergence phases, the matrices $Q_{S_k}$ are such that $|S_k|\geq (2\alpha-1) n$, we have $\frac{|S_k|}{n-1}\geq (2\alpha-1)\frac{n}{n-1}$. Finally note that the remaining eigenvalues are the eigenvalues of the matrix $M_S=(m^S_{i,j})_{i,j\in S^c}$ defined by
$$
m^S_{i,j} = \left\{
\begin{array}{ll}
-1, & i=j\in S^c\\
\frac{1}{n-1}, & i,j\in S^c, j\neq i
\end{array}
\right .
$$
One can rewrite $M_S$ as $M_S=-\frac{n}{n-1} I+\frac{1}{n-1}J\:,$
where $I$ is the identity matrix and $J$ is the matrix with all its entries equal to $1$. Therefore the remaining eigenvalue of $Q_S$ is $-\frac{n}{n-1}$ with mutiplicity $n-|S|-1$ and $\delta(Q,\alpha)\geq(2\alpha-1)$.
\end{proof}

Combining the last lemma with Theorem~\ref{thm:genbound}, we have the following corollary.\\

\begin{corollary} For the complete graph of $n > 1$ nodes, the expected duration of phase $i=1$ and $2$ satisfies
$$
\E(T_i) \leq \frac{1}{2\alpha-1}(\log(n) + 1).
$$
\label{cor:comp}
\end{corollary}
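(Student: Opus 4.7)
The plan is very short because this corollary is a direct composition of the two preceding results: Lemma~\ref{lem:comp1} supplies a lower bound on the spectral quantity $\delta(Q,\alpha)$ for the complete graph, and Theorem~\ref{thm:genbound} converts any such lower bound into an upper bound on $\E(T_i)$. There is no new probabilistic or combinatorial content to introduce.

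Concretely, I would proceed as follows. First, recall from Lemma~\ref{lem:comp1} that for the complete graph on $n>1$ nodes, with uniform contact rate $q_{i,j}=1/(n-1)$, we have $\delta(Q,\alpha)\geq 2\alpha-1$. Next, apply Theorem~\ref{thm:genbound}, whose phase-1 and phase-2 bounds read
$$
\E(T_i)\;\leq\;\frac{1}{\delta(Q,\alpha)}\bigl(\log n + 1\bigr),\qquad i=1,2.
$$
Finally, substitute the lower bound on $\delta(Q,\alpha)$ (which makes the reciprocal an upper bound) to obtain $\E(T_i)\leq \frac{1}{2\alpha-1}(\log n + 1)$, as claimed.

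There is essentially no obstacle; the only thing to double-check is that the regime of subsets $S$ used in the definition of $\delta(Q,\alpha)$ in Lemma~\ref{lem:spec} (namely $|S|/n\in[2\alpha-1,\alpha]$) matches the range of subsets $S_k$ that actually arise during both convergence phases on the complete graph. This is precisely what the proof of Lemma~\ref{lem:comp1} verifies when it notes $|S_k|\geq (2\alpha-1)n$, so the chain of implications is airtight and the corollary follows immediately.
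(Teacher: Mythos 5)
Your proposal is correct and matches the paper's own treatment exactly: the corollary is stated immediately after Lemma~\ref{lem:comp1} with the one-line justification that it follows by combining that lemma's bound $\delta(Q,\alpha)\geq 2\alpha-1$ with the general bound of Theorem~\ref{thm:genbound}. Your additional check that the sets $S_k$ arising in both phases satisfy $|S_k|\geq(2\alpha-1)n$ is the same point the paper makes inside the proof of Lemma~\ref{lem:comp1}, so nothing is missing.
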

In the following, we will show that the latter bound is asymptotically tight, for large number of nodes $n$, for convergence phase~1.
\paragraph{Comparison with an alternative analysis} For complete graphs, the convergence time can be studied by an analysis of the underlying stochastic system that we describe in the following. \\
We first consider the convergence phase~1. Let $0= \tau_0 \leq \tau_1 \leq \cdots \leq \tau_{|S_1|}$ denote the time instances at which a node in state $0$ and a node in state $1$ get in contact. Recall that $|S_0|$ and $|S_1|$ denote the initial number of nodes in state $0$ and state $1$, respectively. It is readily observed that $|S_0(t)| = |S_0| - i$ and $|S_1(t)|=|S_1|-i$, for $\tau_{i}\leq t < \tau_{i+1}$ and $1\leq i < |S_1|$. \\
It is not difficult to observe that $\tau_{i+1}-\tau_i$, $i = 0, 1, \ldots,|S_1|-1$, is a sequence of independent random variables such that for each $0\leq i < |S_1|$, $\tau_{i+1}-\tau_i$ is a minimum of a sequence of $(|S_0|-i)(|S_1|-i)$ i.i.d. random variables with exponential distribution with mean $n-1$. Therefore, the distribution of $\tau_{i+1}-\tau_i$ is exponential with mean $1/\mu_i$, for $0\leq i < |S_1|$, where $\mu_i= (|S_0|-i)(|S_1|-i)/(n-1)$. In particular, we have $\E(T_1)=\sum_{i=0}^{|S_1|-1}\mu_i^{-1}$, i.e.
\begin{equation}
\E(T_1) = (n-1)\sum_{i=0}^{|S_1|-1} \frac{1}{(|S_0|-i)(|S_1|-i)}.
\label{equ:ep1}
\end{equation}

\begin{figure*}[t]
\vspace*{-4cm}
\begin{center}
\hspace*{-0.8cm}\psfig{figure=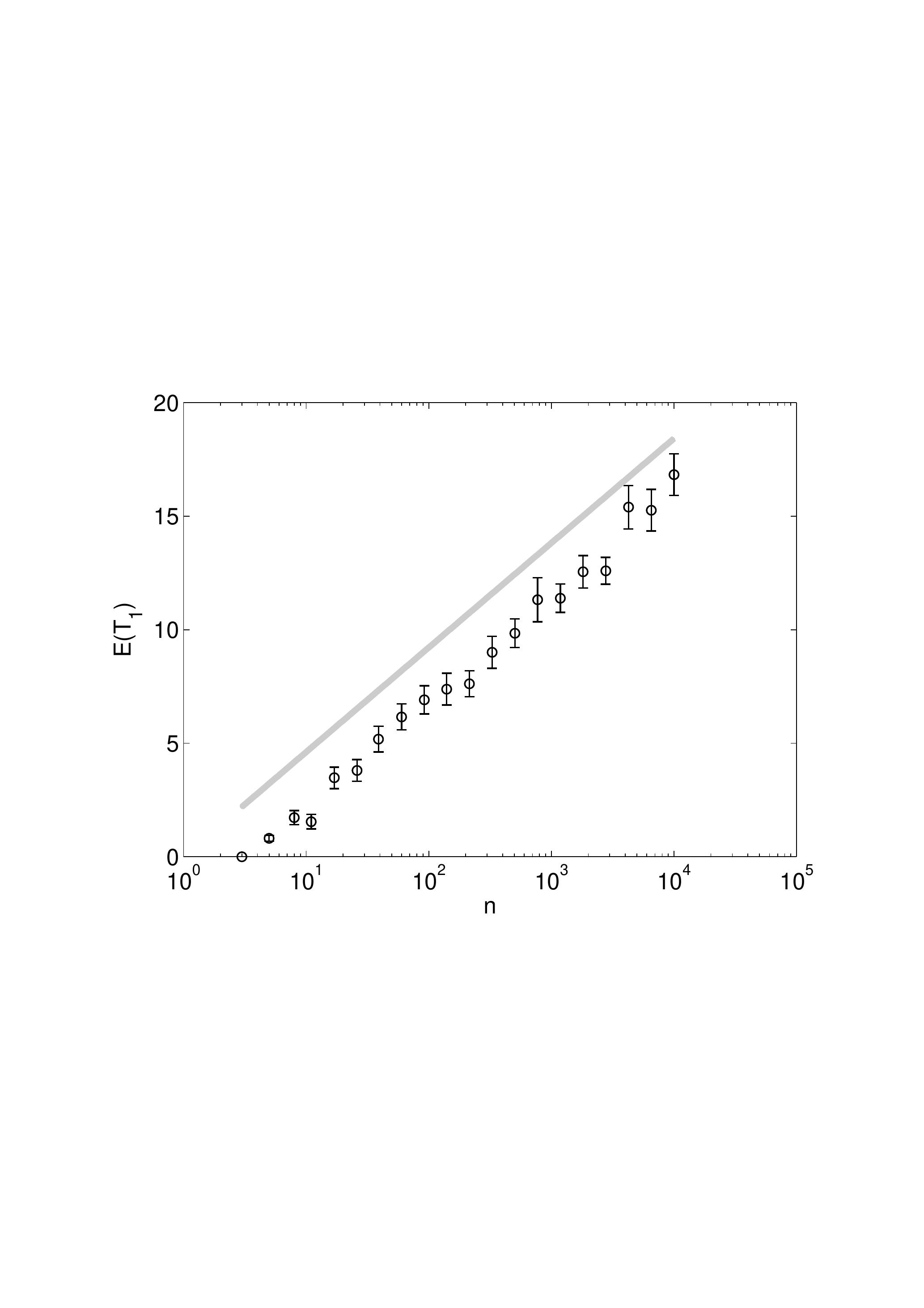,height=5.5in,width=3in}\hspace*{-1.0cm}
\hspace*{0cm}\psfig{figure=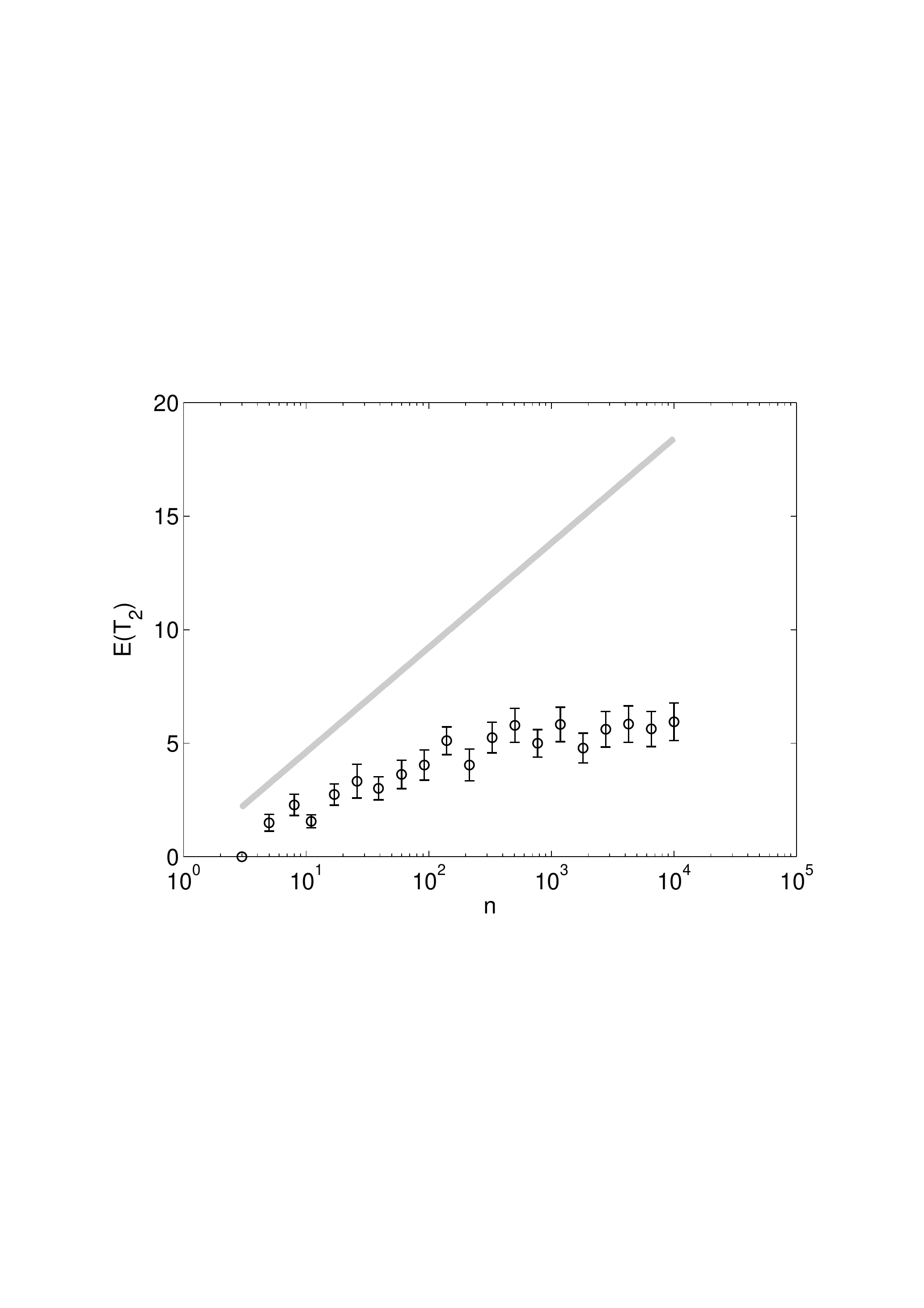,height=5.5in,width=3in}
\vspace{-4.5cm}
\caption{Complete graph: the expected duration of convergence phases vs. the number of nodes $n$. The initial majority state is held by $\lceil \alpha n\rceil$ nodes, where $\alpha = 3/4$. The solid line is the asymptote $\log(n)/(2\alpha - 1)$ while the bars are simulation results with $95\%$ confidence intervals.}
\label{fig:compN}
\end{center} 
\end{figure*}

The above considerations result in the following proposition.\\

\begin{proposition} For every complete graph with $n > 1$ nodes and initial state such that $|S_0|>|S_1| > 0$, the duration of the first convergence phase is a random variable $T_1$ with the following expected value
\begin{equation}
\E(T_1) = \frac{n-1}{|S_0|-|S_1|}\left(H_{|S_1|}+H_{|S_0|-|S_1|} - H_{|S_0|}\right)
\label{equ:ep11}
\end{equation}
where $H_k=\sum_{i=1}^k \frac{1}{i}$. Furthermore, for every fixed $\alpha \in (1/2,1]$, we have
$$
\E(T_1) = \frac{1}{2\alpha - 1}\log(n) + O(1).
$$
\label{lem:comp2}
\end{proposition}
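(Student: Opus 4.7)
The plan is to start directly from the closed-form representation already derived in the paper, namely
$$
\E(T_1) \;=\; (n-1)\sum_{i=0}^{|S_1|-1} \frac{1}{(|S_0|-i)(|S_1|-i)},
$$
which is equation (\ref{equ:ep1}). The first part of the proposition is therefore purely a summation identity, and the second part is a routine asymptotic expansion.

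For the exact formula, I would apply partial fractions. Since $|S_0| > |S_1|$, we have
$$
\frac{1}{(|S_0|-i)(|S_1|-i)} \;=\; \frac{1}{|S_0|-|S_1|}\left(\frac{1}{|S_1|-i} - \frac{1}{|S_0|-i}\right).
$$
Summing over $i = 0, 1, \ldots, |S_1|-1$ and making the substitutions $j = |S_1|-i$ in the first piece and $j = |S_0|-i$ in the second, the two sums telescope into harmonic numbers:
$$
\sum_{i=0}^{|S_1|-1} \frac{1}{|S_1|-i} = H_{|S_1|}, \qquad \sum_{i=0}^{|S_1|-1}\frac{1}{|S_0|-i} = H_{|S_0|} - H_{|S_0|-|S_1|}.
$$
Multiplying by the prefactor $(n-1)/(|S_0|-|S_1|)$ then yields the claimed closed form (\ref{equ:ep11}).

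For the asymptotic statement, I would plug in $|S_0| = \lceil \alpha n\rceil$ and $|S_1| = n-|S_0|$, so that $|S_0|-|S_1| = (2\alpha-1)n + O(1)$, and use the standard expansion $H_k = \log k + \gamma + O(1/k)$. Then
$$
H_{|S_1|} + H_{|S_0|-|S_1|} - H_{|S_0|} \;=\; \log n + \log\!\frac{(1-\alpha)(2\alpha-1)}{\alpha} + O(1/n),
$$
since three factors of $\log n$ appear and two are cancelled by the minus sign. Dividing by $(2\alpha-1)n$ and multiplying by $n-1$ gives the stated $\frac{1}{2\alpha-1}\log n + O(1)$, with the edge case $\alpha=1$ being trivial because $|S_1|=0$ and $T_1=0$.

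There is no real obstacle here; the argument is essentially bookkeeping. The only point to take care of is that the $O(1/k)$ error terms in the expansion of the harmonic numbers must remain $O(1)$ after being multiplied by the prefactor $(n-1)/((2\alpha-1)n)$, which they do because that prefactor is bounded for every fixed $\alpha \in (1/2,1]$, and that the $O(1)$ rounding from the ceilings in $\lceil\alpha n\rceil$ is absorbed in the additive constant.
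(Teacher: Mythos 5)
Your proposal is correct and follows essentially the same route as the paper, which likewise derives Proposition~\ref{lem:comp2} from the representation (\ref{equ:ep1}) via partial fractions into harmonic numbers and the standard expansion $H_k = \log k + \gamma + O(1/k)$. The only cosmetic quibble is that at $\alpha=1$ the asymptotic claim degenerates (since $T_1=0$ while $\log n + O(1)$ does not vanish), but that is an artifact of the paper's own statement rather than of your argument.
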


From the result of the proposition, we observe that the expected duration of the first phase is $\log(n)/\delta(Q,\alpha)$, asymptotically for large $n$, where $\delta(Q,\alpha)$ is given in Lemma~\ref{lem:comp1}, thus matching the upper bound of Theorem~\ref{thm:genbound}.

For the prevailing case of a complete graph, we can characterize the expected convergence time of the first phase as $\alpha$ approaches $1/2$, i.e. as the voting margin $2\alpha - 1$ approaches $0$ from above. We first consider the limit case where initially there is an equal number of nodes in state $0$ and state $1$, i.e. $|S_0| = |S_1|$. From (\ref{equ:ep1}), it is straightforward to note
$$
\E(T_1) = \frac{\pi^2}{6}n (1+o(1)).
$$
Therefore, we observe that in case of an initial draw, i.e. equal number of state $0$ and state $1$ nodes, the expected duration of the first phase scales linearly with the network size $n$. Note that in this case, the second phase starts with nodes in state $e_0$ and state $e_1$ and obviously no majority can follow.

 We now discuss the case where $|S_0|-|S_1|$ is strictly positive but small. To this end, let $\mu_n$ denote the voting margin, i.e. $\mu_n = (|S_0| - |S_1|)/n$. From (\ref{equ:ep11}), is easy to observe that
$$
\E(T_1)=\frac{1}{\mu_n}\log(n \mu_n) + O(1).
$$ 
Therefore, we note that for the voting margin $\mu_n = O(1/n)$, $\E(T_1) = \Theta(n)$ while for $\mu_n > 0$ a fixed constant, we have $\E(T_1) = \Theta(\log(n))$. For the intermediate values of the voting margin, say for $\mu_n = 1/n^a$, for $0 < a < 1$, we have $\E(T_1) = \frac{1-a}{2}n^a \log(n)$.

Finally, we compare the bound $\log(n)/(2\alpha-1)$ with simulation results, in Figure~\ref{fig:compN}. We observe that the bound is tight for phase~1 and not tight for phase~2, due to our choice of the initial condition in phase 2. Note also that Figure~\ref{fig:compN} indicates that the expected duration of convergence phase~2 scales as $\Theta\left(\log(\log n)\right)$.

\subsection{Paths}

We consider a path of $n > 1$ nodes where each edge is activated at instances of a Poisson process of rate $1$. Therefore, the contact rate matrix $Q$ is given by $q_{i,i+1}=1$, for $i=1,\dots,n-1$, $q_{i,i-1}=1$, for $i=2,3,\ldots,n$, and all other elements equal to $0$.\\

\begin{lemma} For a path of $n > 1$ nodes, we have, for $\alpha\in(1/2,1]$,
\begin{eqnarray*}
\delta(Q,\alpha) &=& 2\left(1-\cos\left(\frac{\pi}{4(1-\alpha)n+1}\right)\right)\\
&= & \frac{\pi^2}{16(1-\alpha)^2 n^2}(1+o(1)).\\
\end{eqnarray*}
\label{thm:path}
\end{lemma}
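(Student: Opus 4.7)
The plan is to apply Lemma~\ref{lem:spec}, which reduces the problem to minimizing $|\lambda_{Q_S}|$ over subsets $S\subset V$ with $|S|=(2\alpha-1)n$. From the proof of that lemma, the spectrum of $Q_S$ consists of the eigenvalues $\{-q_i\}_{i\in S}$ (each at most $-1$ in absolute value since $q_i\in\{1,2\}$ on the path) together with the eigenvalues of the principal submatrix $M_S$ on $S^c$; since I will exhibit a configuration giving $|\lambda|=O(1/n^2)$, only $M_S$ matters. Deleting the vertices of $S$ from the path leaves $S^c$ as a disjoint union of maximal sub-paths, so $M_S$ is block-diagonal with one tridiagonal block per sub-path. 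Each block has off-diagonals equal to $1$ and diagonal entries $-q_i$, i.e.\ $-2$ at every internal vertex of the path and $-1$ at the path-endpoints $1$ and $n$ whenever they lie in $S^c$. Thus each block is of one of two types: an \emph{interior block} of size $m$ with diagonal $(-2,\ldots,-2)$, or an \emph{endpoint block} of size $m$ with diagonal $(-1,-2,\ldots,-2)$ (up to reflection), and at most two endpoint blocks can co-exist.

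Next I compute the spectrum of each block via the trigonometric ansatz for the recursion $x_{i-1}-2x_i+x_{i+1}=\lambda x_i$ with $\lambda=-2(1-\cos\theta)$. An interior block satisfies phantom Dirichlet conditions $x_0=x_{m+1}=0$, so $x_i=\sin(i\theta)$ with $\theta=j\pi/(m+1)$, $j=1,\ldots,m$. An endpoint block has a modified first-row equation $-x_1+x_2=\lambda x_1$ that is equivalent to the Neumann-type condition $x_0=x_1$, coupled with $x_{m+1}=0$ at the other end; this selects $x_i=\cos((i-1/2)\theta)$ with $\theta=(2j-1)\pi/(2m+1)$, $j=1,\ldots,m$. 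In both cases the largest (least negative) eigenvalue corresponds to $j=1$ and equals $-2(1-\cos(\pi/(m+1)))$ for interior blocks and $-2(1-\cos(\pi/(2m+1)))$ for endpoint blocks.

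Finally, to minimize $|\lambda_{Q_S}|$ I must maximize the largest block-level eigenvalue over all partitions of the total block-size $k=2(1-\alpha)n$ into interior and endpoint pieces. Since an endpoint block of size $m$ strictly beats an interior block of the same size (because $\pi/(2m+1)<\pi/(m+1)$), and since the endpoint-block largest-eigenvalue strictly increases toward $0$ in $m$, the optimum is to place the entire mass in a single endpoint block of size $k$, realized by $S=\{k+1,\ldots,n\}$. This gives
$$
\delta(Q,\alpha)= 2\left(1-\cos\left(\frac{\pi}{2k+1}\right)\right)= 2\left(1-\cos\left(\frac{\pi}{4(1-\alpha)n+1}\right)\right),
$$
and the asymptotic $\pi^2/(16(1-\alpha)^2 n^2)(1+o(1))$ follows from $1-\cos(x)=x^2/2+O(x^4)$. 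The main obstacle is the combinatorial optimization step: I must rule out all mixed configurations (e.g.\ one endpoint block of size $k-1$ together with a singleton interior block, or two endpoint blocks of comparable sizes). This reduces to checking that for any partition $k=a+b+\sum_i m_i$ with $a,b$ the sizes of endpoint blocks and $m_i$ those of interior blocks, one has $\pi/(2k+1)\leq \min\{\pi/(2a+1),\pi/(2b+1),\pi/(m_i+1)\}$, which is immediate since $2k+1\geq 2a+1,\,2b+1,\,m_i+1$. A minor technical point is that $(2\alpha-1)n$ need not be an integer; the argument goes through on the nearest integer, and the stated asymptotic is unaffected.
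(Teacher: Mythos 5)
Your proposal is correct and follows essentially the same route as the paper: reduce to the principal submatrix $M_S$ on $S^c$, observe it splits into tridiagonal blocks of two types (all-$(-2)$ diagonal for interior runs, one $-1$ corner for runs touching a path endpoint), use the known spectra $-2(1-\cos(\pi k/(m+1)))$ and $-2(1-\cos((2k-1)\pi/(2m+1)))$, and conclude that a single endpoint cluster of size $|S^c|=2(1-\alpha)n$ is optimal. The only differences are cosmetic — the paper organizes the block enumeration as four cases according to whether nodes $1$ and $n$ lie in $S$, and cites a reference for the tridiagonal eigenvalues where you derive them by the trigonometric ansatz; your explicit comparison $2k+1\geq 2a+1,\,2b+1,\,m_i+1$ makes the optimization step that the paper leaves as ``readily checked'' fully rigorous.
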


\begin{figure*}[t]
\vspace*{-4cm}
\begin{center}
\hspace*{-1.0cm}\psfig{figure=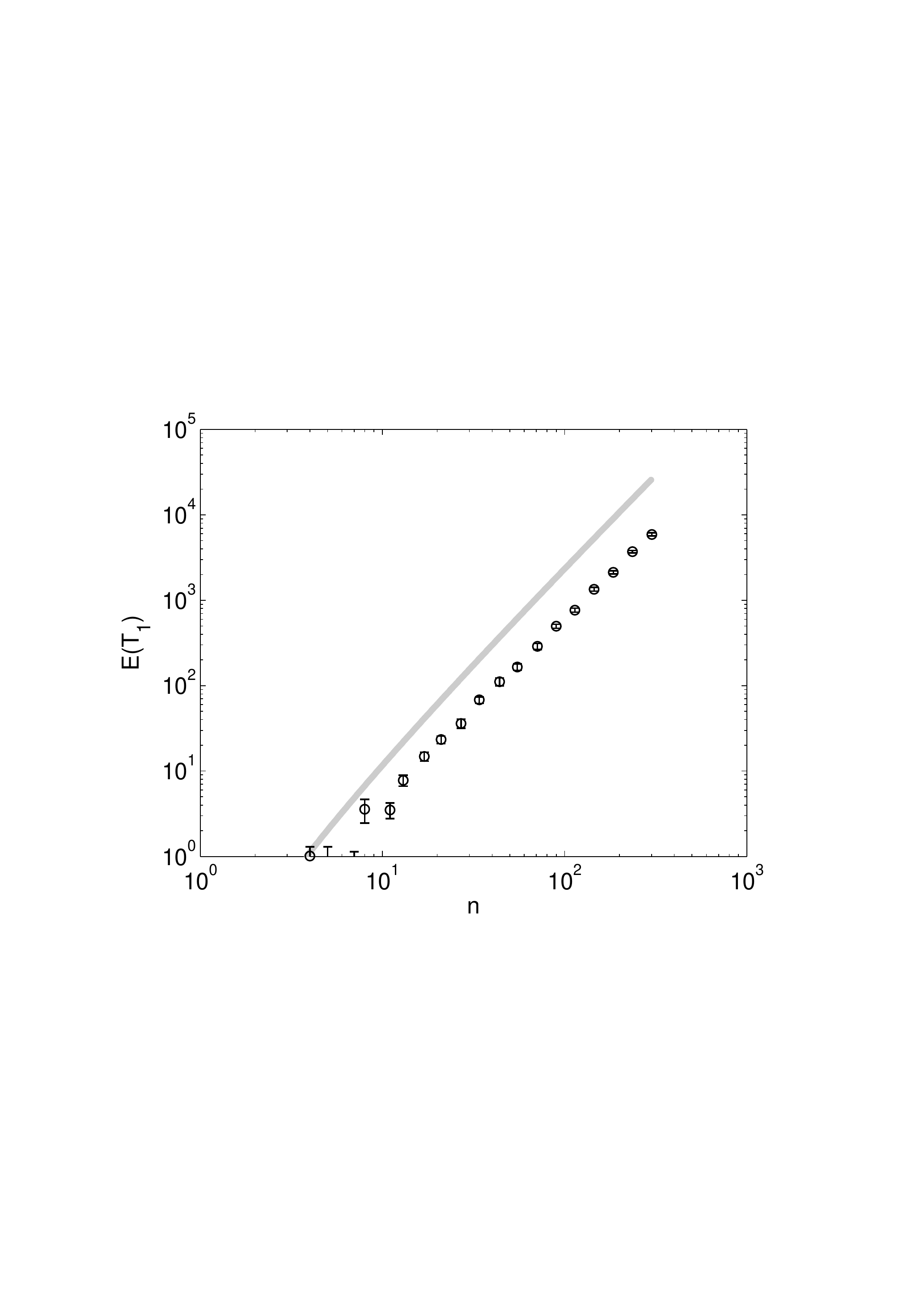,height=5.5in,width=3.3in}\hspace*{-1.9cm}
\hspace*{0cm}\psfig{figure=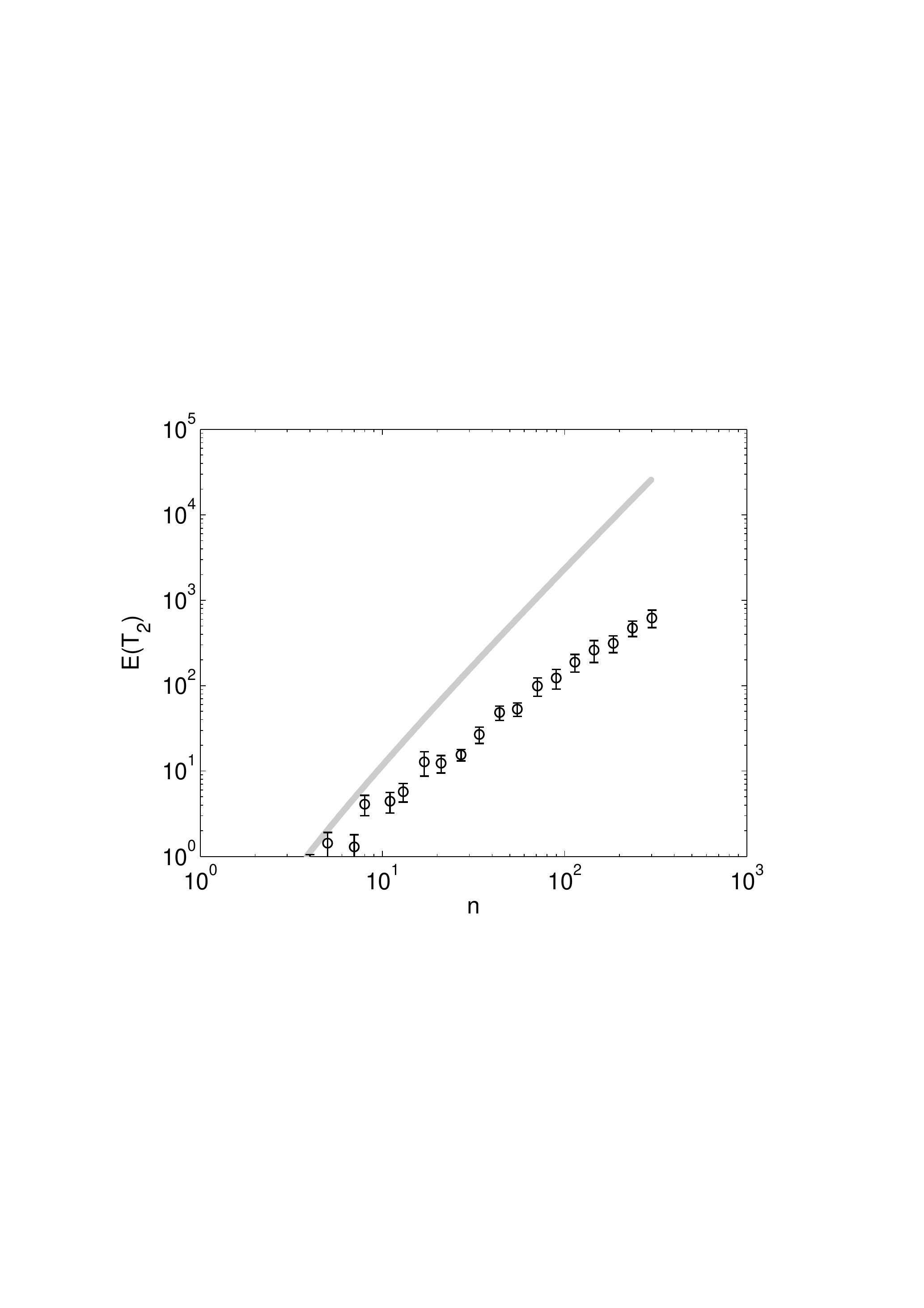,height=5.5in,width=3.3in}
\vspace{-4cm}
\caption{Path: the expected duration of convergence phases vs. the number of nodes $n$, for $\alpha = 3/4$. The solid line is the asymptote in Corollary~\ref{cor:path} while the bars are simulation results with $95\%$ confidence intervals.}
\label{fig:path}
\end{center} 
\end{figure*}

The proof is provided in Appendix~\ref{sec:delta-path}.The previous lemma, together with Theorem~\ref{thm:genbound}, yields the following result.\\

\begin{corollary} For a path of $n > 1$ nodes and $\alpha \in (1/2,1)$, we have for phase $i=1$ and $2$,
$$
\E(T_i) \leq \frac{16(1-\alpha)^2}{\pi^2}n^2 \log(n) + O(1).
$$
\label{cor:path}
\end{corollary}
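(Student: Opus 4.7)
The plan is to combine the explicit expression for $\delta(Q,\alpha)$ established in Lemma~\ref{thm:path} with the general upper bound on the expected phase durations from Theorem~\ref{thm:genbound}. For $i \in \{1,2\}$, the theorem yields
$$
\E(T_i) \leq \frac{\log n + 1}{\delta(Q,\alpha)},
$$
so the entire task reduces to inverting the closed-form value of $\delta(Q,\alpha)$ and expanding asymptotically in $n$.

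First I would write $\delta(Q,\alpha) = 2(1 - \cos \theta_n)$ with $\theta_n := \pi/(4(1-\alpha)n + 1)$, which is the exact expression from Lemma~\ref{thm:path}. Since $\theta_n = \Theta(1/n)$, the Taylor expansion $1 - \cos x = x^2/2 + O(x^4)$ gives $\delta(Q,\alpha) = \theta_n^2 \bigl(1 + O(1/n^2)\bigr)$, so
$$
\frac{1}{\delta(Q,\alpha)} = \frac{(4(1-\alpha)n + 1)^2}{\pi^2}\bigl(1 + O(1/n^2)\bigr) = \frac{16(1-\alpha)^2}{\pi^2}\, n^2 + O(n).
$$

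Multiplying by $\log n + 1$ then produces
$$
\E(T_i) \leq \left(\frac{16(1-\alpha)^2}{\pi^2} n^2 + O(n)\right)(\log n + 1) = \frac{16(1-\alpha)^2}{\pi^2}\, n^2 \log n + O(n^2),
$$
which identifies the leading term appearing in the corollary, the remainder being absorbed into the sub-leading contribution.

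There is essentially no obstacle in this step: the argument is a mechanical substitution followed by a Taylor expansion. The substantive work — namely the spectral analysis of the tridiagonal matrices $Q_S$ needed to obtain the closed-form $\delta(Q,\alpha) = 2(1 - \cos(\pi/(4(1-\alpha)n+1)))$, and in particular the identification of which subset $S$ of size $(2\alpha-1)n$ minimises $|\lambda_{Q_S}|$ — is already carried out in Lemma~\ref{thm:path} (and its deferred proof in Appendix~\ref{sec:delta-path}), which I may invoke as given.
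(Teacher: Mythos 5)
Your proposal is exactly the paper's (implicit) proof: the paper obtains this corollary by simply substituting the value of $\delta(Q,\alpha)$ from Lemma~\ref{thm:path} into the bound $\E(T_i)\leq(\log n+1)/\delta(Q,\alpha)$ of Theorem~\ref{thm:genbound}, which is precisely what you do. Note only that, as your own expansion honestly records, this substitution yields an additive remainder of order $O(n^2)$ rather than the $O(1)$ written in the corollary's statement, so that small discrepancy lies in the paper's phrasing of the error term and not in your argument.
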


Finally, we compare the asymptotic bound with simulation results in Figure~\ref{fig:path}. The results indicate that the bound is  rather tight for phase~1 and is not tight for phase~2.

\subsection{Cycles}

We consider a cycle of $n > 1$ nodes where each edge is activated at instances of a Poisson process with rate $1$. Therefore, the contact rate matrix $Q$ is given by $q_{i,i+1}=1$, for $i=1,\dots,n-1$, $q_{i,i-1}=1$, for $i=2,3,\ldots,n$, $q_{1,n}=q_{n,1}=1$, and all other elements equal to $0$.\\

\begin{lemma} For a cycle network of $n > 1$ nodes, we have, for $\alpha\in(1/2,1]$,
\begin{eqnarray*}
\delta(Q,\alpha) &=& 2\left(1 - \cos\left(\frac{\pi}{2(1-\alpha)n+1}\right)\right)\\
&= &\frac{\pi^2}{4(1-\alpha)^2 n^2}(1+o(1)).\\
\end{eqnarray*}
\label{thm:cycle}
\end{lemma}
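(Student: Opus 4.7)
The plan is to follow the same route as the proof of Lemma~\ref{lem:spec}: fix $S\subset V$ with $|S|=(2\alpha-1)n$, recall that the eigenvalues of $Q_S$ split into the diagonal values $-q_i=-2$ for $i\in S$ together with the eigenvalues of the principal submatrix $M_S$ indexed by $S^c$, and then compute the spectrum of $M_S$ explicitly. The key simplification on the cycle is that every vertex has degree exactly $2$, so the diagonal of $M_S$ is uniformly $-2$, independent of the choice of $S$.

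The induced subgraph on $S^c$ consists of a disjoint collection of arcs of the cycle (maximal runs of consecutive vertices lying in $S^c$), so $M_S$ is block-diagonal with one tridiagonal block per arc. For an arc of length $k$, the corresponding block equals $-2I+A_k$, where $A_k$ is the adjacency matrix of a path on $k$ vertices. The eigenvalues of such a block are the classical
$$-2+2\cos\!\left(\frac{j\pi}{k+1}\right),\qquad j=1,\ldots,k,$$
so the largest (closest to $0$) eigenvalue of the block is $-2(1-\cos(\pi/(k+1)))$, which equals $-2$ for $k=1$ and tends monotonically to $0$ as $k$ grows.

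Combining blocks, $|\lambda_{Q_S}|=2(1-\cos(\pi/(k_{\max}+1)))$, where $k_{\max}$ is the length of the longest arc of $S^c$. Minimising over admissible $S$ with $|S^c|=2(1-\alpha)n$ therefore amounts to maximising $k_{\max}$; the maximum is attained precisely when $S^c$ is a single arc (equivalently, when $S$ is a single arc on the cycle), in which case $k_{\max}=2(1-\alpha)n$. This yields the closed form
$$\delta(Q,\alpha)=2\left(1-\cos\!\left(\frac{\pi}{2(1-\alpha)n+1}\right)\right),$$
and the asymptotic $\pi^2/(4(1-\alpha)^2 n^2)\cdot(1+o(1))$ follows from the Taylor expansion $1-\cos(x)=x^2/2+O(x^4)$ as $x=\pi/(2(1-\alpha)n+1)\to 0$.

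The only thing that needs a brief justification is the monotonicity claim ruling out the possibility that breaking $S^c$ into several shorter arcs could decrease $|\lambda_{Q_S}|$, but this is immediate from the strict monotonicity of $k\mapsto 2(1-\cos(\pi/(k+1)))$. I expect the cycle case to be technically cleaner than the path case treated in Appendix~\ref{sec:delta-path}, since the path has degree-$1$ endpoints that introduce an asymmetric diagonal entry into the tridiagonal block; on the cycle every vertex has uniform degree $2$, so no such boundary correction arises and the standard tridiagonal spectrum applies directly.
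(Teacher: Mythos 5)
Your proof is correct and follows essentially the same route as the paper's: both reduce to block-diagonal tridiagonal matrices with constant diagonal $-2$, invoke the classical spectrum $-2+2\cos(j\pi/(k+1))$ of each block, and identify the extremal configuration as $S^c$ being a single arc of length $2(1-\alpha)n$. The only difference is organizational: you work directly with the submatrix $M_S$ from Lemma~\ref{lem:spec}, whereas the paper expands the characteristic polynomial through a four-case analysis on whether the (arbitrarily labelled) vertices $1$ and $n$ lie in $S$ or $S^c$ --- your observation that the cycle's uniform degree makes this bookkeeping unnecessary is accurate.
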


The proof is provided in Appendix~\ref{sec:delta-cycle}.\\

\begin{figure*}[t]
\begin{center}
\vspace*{-4cm}
\hspace*{-1cm}\psfig{figure=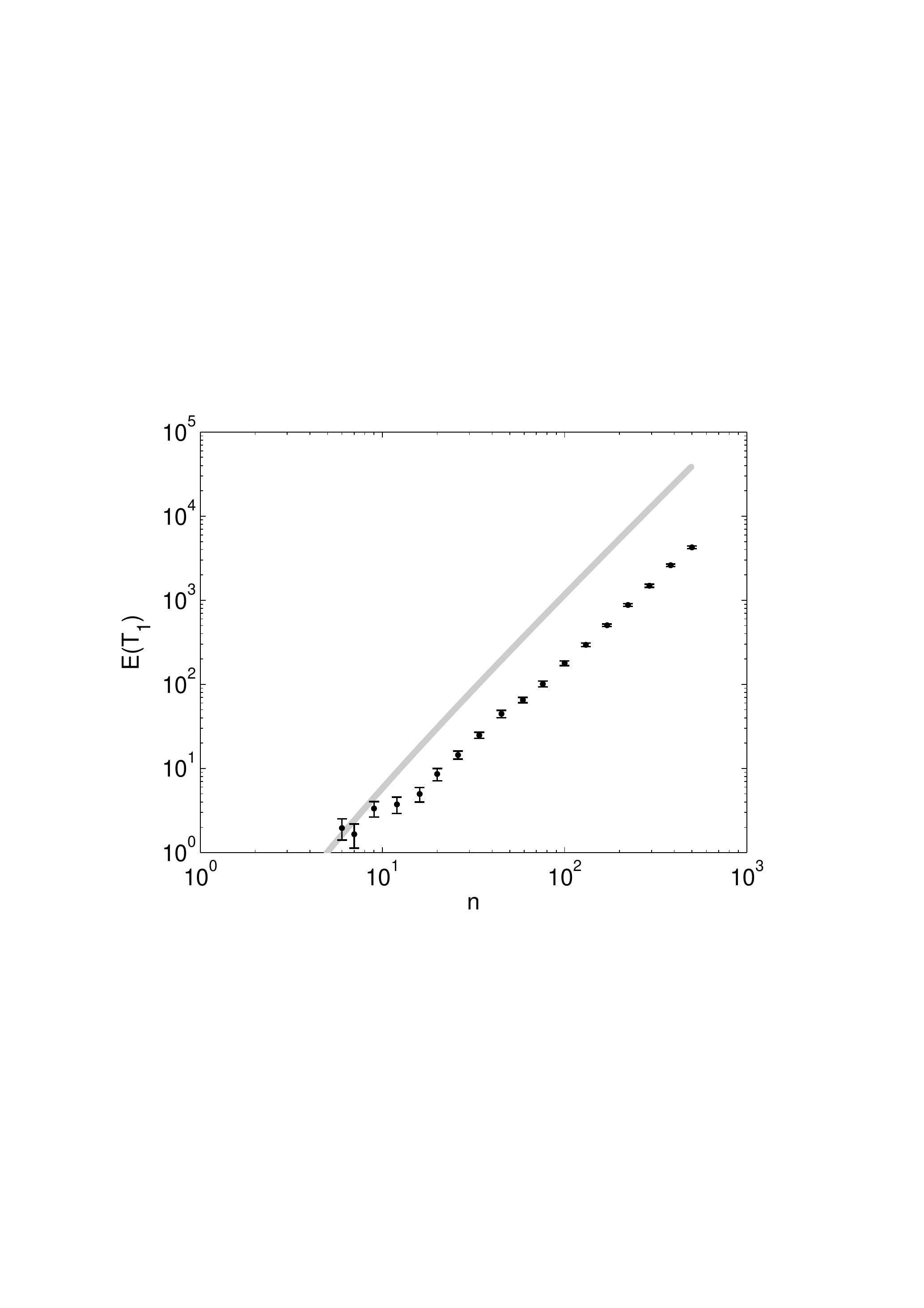,height=5.5in,width=3.3in}\hspace*{-1.9cm}
\hspace*{-0cm}\psfig{figure=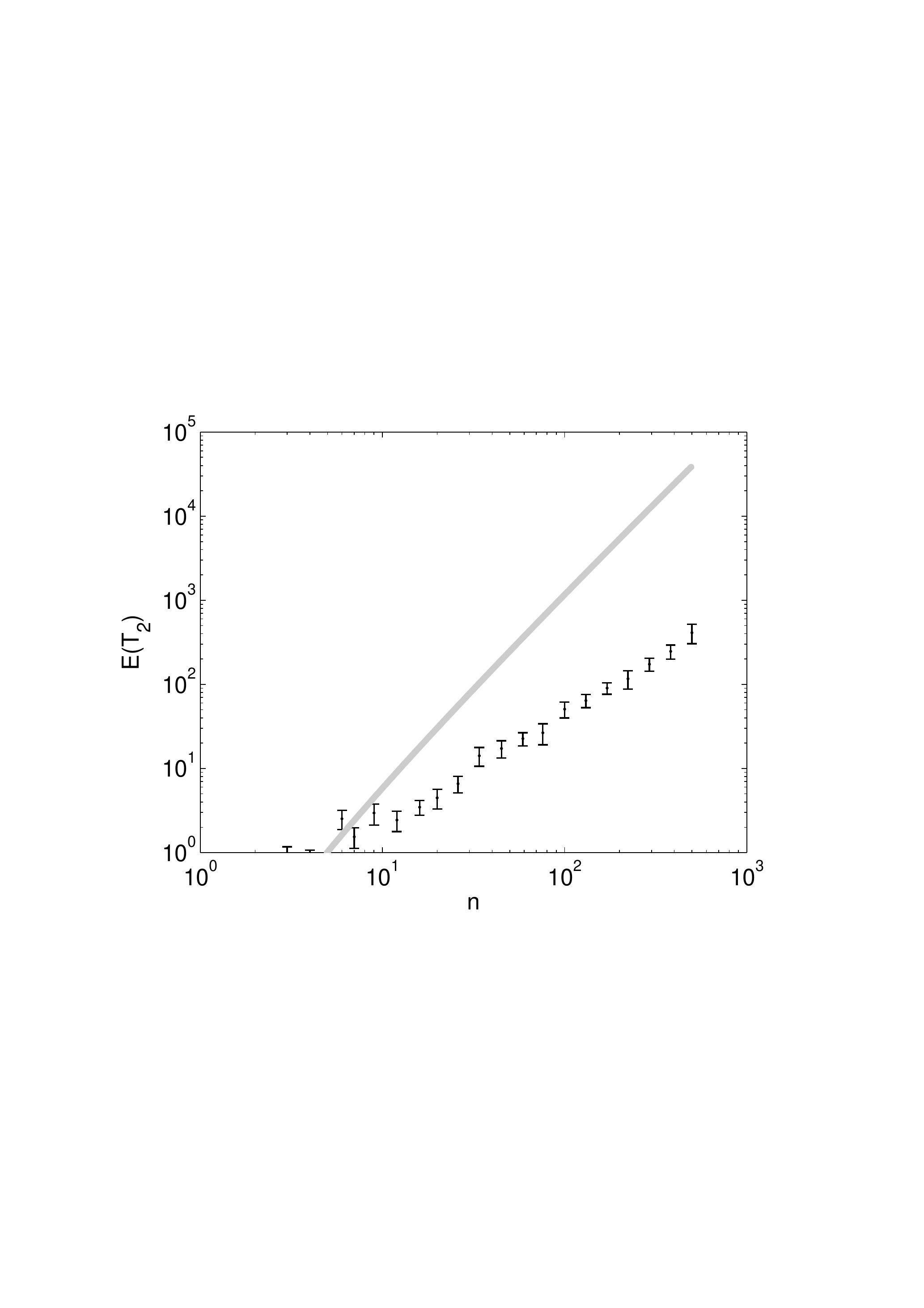,height=5.5in,width=3.3in}
\vspace{-4.5cm}
\caption{Cycle: the expected durations of convergence phases vs. the number of the nodes $n$. The initial state is such that state $0$ is held by a set of $\lceil \alpha n\rceil $ consecutive nodes along the cycle with $\alpha = 3/4$.}
\label{fig:cycleN}
\end{center} 
\end{figure*}

\begin{corollary} For the cycle with $\alpha \in (1/2,1)$, we have for phase $i=1$ and $2$,
$$
\E(T_i) \leq \frac{4(1-\alpha)^2}{\pi^2}n^2 \log(n) + O(1).
$$
\end{corollary}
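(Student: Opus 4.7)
The plan is essentially a one-line deduction: this corollary follows immediately by combining Theorem~\ref{thm:genbound} with the spectral estimate from Lemma~\ref{thm:cycle} specialized to the cycle. Since both ingredients are already available in the excerpt, there is no new technical content to develop; the only ``work'' is the asymptotic bookkeeping.

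First I would recall Theorem~\ref{thm:genbound}, which gives, for each of the two phases,
$$
\E(T_i) \;\leq\; \frac{1}{\delta(Q,\alpha)}\bigl(\log n + 1\bigr).
$$
Then I would substitute the exact value of $\delta(Q,\alpha)$ supplied by Lemma~\ref{thm:cycle}, namely
$$
\delta(Q,\alpha) \;=\; 2\Bigl(1-\cos\Bigl(\tfrac{\pi}{2(1-\alpha)n+1}\Bigr)\Bigr).
$$
A Taylor expansion $1-\cos(x) = x^2/2 + O(x^4)$ applied with $x=\pi/(2(1-\alpha)n+1)$ gives
$$
\frac{1}{\delta(Q,\alpha)} \;=\; \frac{(2(1-\alpha)n+1)^2}{\pi^2}\bigl(1+o(1)\bigr) \;=\; \frac{4(1-\alpha)^2}{\pi^2}\,n^2 + O(n),
$$
for fixed $\alpha \in (1/2,1)$.

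Multiplying by $\log n + 1$ yields
$$
\E(T_i) \;\leq\; \frac{4(1-\alpha)^2}{\pi^2}\,n^2 \log(n) + O(1),
$$
where the $O(1)$ notation absorbs the lower-order contributions (exactly in the same manner as in Corollary~\ref{cor:path} for the path, whose proof has the identical structure). Since there is no obstacle beyond this elementary calculation, the plan reduces to stating the bound and performing the Taylor expansion; no separate argument for either phase is needed, because Theorem~\ref{thm:genbound} already produces the same bound for both $T_1$ and $T_2$ in terms of $\delta(Q,\alpha)$.
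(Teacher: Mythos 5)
Your proposal is correct and follows exactly the paper's (implicit) argument: the corollary is stated immediately after Lemma~\ref{thm:cycle} with no separate proof, precisely because it is the one-line combination of that lemma's value of $\delta(Q,\alpha)$ with the bound $\E(T_i)\leq(\log n+1)/\delta(Q,\alpha)$ from Theorem~\ref{thm:genbound}. The only (shared) imprecision is that the error term produced by this calculation is really $O(n\log n)$ rather than $O(1)$, but that is inherited from the statement itself and appears identically in Corollary~\ref{cor:path}.
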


Finally, we compare the last bound with simulation results in Figure~\ref{fig:cycleN}. Similar as in other cases, we observe that the bound has the same scaling with the number of nodes as the expected duration of convergence phase~1, and is not tight for convergence phase~2. 

\subsection{Star-Shaped Networks}

We consider a star-shaped network that consists of a hub node and $n-1$ leaf nodes. Without loss of generality, let the hub node be node $1$ and let $i = 2,3,\ldots, n$ be the leaf nodes. The contacts between a leaf node and the hub are assumed to occur at instances of a Poisson process of rate $1/(n-1)$. This setting is motivated in practice by networks where a designated node assumes the role of an information aggregator to which other nodes are connected and this aggregator node has access capacity of rate $1$. The elements of matrix $Q$ are given by $q_{1,i} = q_{i,1}=1/(n-1)$, for $i=2,3,\ldots,n$ and other elements equal to $0$.

We have the following lemma for the star-shaped network that we defined above.\\

\begin{lemma} For the star network of $n > 1$ nodes, we have
\begin{eqnarray*}
\delta(Q,\alpha) &=& \frac{n}{2(n-1)}\left(1-\sqrt{1-\frac{4(2\alpha-1)}{n}}\right)\\
&\geq & \frac{2\alpha-1}{n}
\end{eqnarray*}
where the inequality is tight for large $n$.\\
\label{thm:star}
\end{lemma}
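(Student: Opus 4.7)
The plan is to exploit the sparse structure of the star, where the only non-zero entries of $Q$ involve the hub node. I will split into two cases according to whether the hub (node $1$) lies in $S$ or in $S^c$, compute the spectrum of $Q_S$ explicitly in each case, and then minimize $|\lambda_{Q_S}|$ over $S$ subject to $|S|=(2\alpha-1)n$.

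From the proof of Lemma~\ref{lem:spec}, the spectrum of $Q_S$ consists of $\{-q_i : i \in S\}$ together with the spectrum of the principal submatrix $M_S$ indexed by $S^c$. For the star, $q_1 = 1$ and $q_i = 1/(n-1)$ for every leaf $i$. If the hub is in $S$, then $M_S$ is diagonal (leaves are not adjacent to one another), so all its eigenvalues equal $-1/(n-1)$, giving $|\lambda_{Q_S}| = 1/(n-1)$. If the hub is in $S^c$, write $|S^c|=k+1$ with $k = 2(1-\alpha)n-1$ leaves in $S^c$. The matrix $M_S$ then has a simple ``arrow'' structure: diagonal $(-1,-c,\dots,-c)$ with $c=1/(n-1)$, and off-diagonal entries $c$ coupling the hub row/column to each of the $k$ leaves.

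To diagonalize $M_S$ in this case, I look for eigenvectors $(x_1,x_2,\dots,x_{k+1})$. If $\lambda \neq -c$, the equations $(\lambda+c)x_j = c\, x_1$ force all leaf components to be equal, and substitution into the hub equation yields the quadratic $(\lambda+1)(\lambda+c) = kc^2$. Its two roots are
$$
\lambda_\pm \;=\; \frac{-(1+c)\pm\sqrt{(1-c)^2+4kc^2}}{2},
$$
while the remaining eigenvalue is $\lambda=-c$ with multiplicity $k-1$ (corresponding to vectors with $x_1=0$ and $\sum_{j\ge 2} x_j = 0$). Plugging in $c=1/(n-1)$ and $k=2(1-\alpha)n-1$ and simplifying the discriminant gives $(1-c)^2+4kc^2 = n^2(1-4(2\alpha-1)/n)/(n-1)^2$, so the largest eigenvalue is
$$
\lambda_+ \;=\; -\frac{n}{2(n-1)}\left(1-\sqrt{1-\tfrac{4(2\alpha-1)}{n}}\right).
$$

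It then remains to show that this quantity has smaller modulus than the Case~1 value $1/(n-1)$ (so that the minimum in $\delta(Q,\alpha)$ is attained when the hub lies in $S^c$), and to derive the claimed lower bound. The first reduces to the elementary inequality $1-\sqrt{1-4(2\alpha-1)/n} \leq 2/n$, equivalent to $\alpha \leq 1 - 1/(2n)$, which holds for all relevant $\alpha\in(1/2,1)$ and $n$ sufficiently large. For the inequality $\delta(Q,\alpha)\geq (2\alpha-1)/n$, I apply $\sqrt{1-x}\leq 1-x/2$ with $x=4(2\alpha-1)/n$ to obtain $1-\sqrt{1-x}\geq 2(2\alpha-1)/n$, and combine with $n/(2(n-1))\geq 1/2$. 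The claim that this bound is tight for large $n$ follows from the Taylor expansion $1-\sqrt{1-x} = x/2 + O(x^2)$, giving $\delta(Q,\alpha) = (2\alpha-1)/n + O(1/n^2)$. The main obstacle is simply carrying out the quadratic and radical manipulations cleanly; there is no conceptual difficulty once the case split is set up.
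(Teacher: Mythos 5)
Your proposal is correct and follows essentially the same route as the paper: a case split on whether the hub lies in $S$ or $S^c$, an explicit spectral computation reducing Case~2 to the quadratic $(\lambda+1)(\lambda+c)=kc^2$ (identical to the paper's $\lambda^2+\frac{n}{n-1}\lambda+\frac{|S|}{(n-1)^2}=0$), and a comparison of the two cases. Your treatment is in fact slightly more careful than the paper's on two points it leaves implicit — accounting for the full multiplicity of the eigenvalue $-1/(n-1)$ and verifying explicitly that the Case~2 root has smaller modulus than the Case~1 value.
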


The proof is provided in Appendix~\ref{sec:delta-star}. The previous lemma yields the following corollary.\\
\begin{corollary} For the star network with $n > 1$ nodes and every fixed $\alpha \in (1/2,1]$, the expected duration of phase $i=1$ and $2$ satisfies
$$
\E(T_i) \leq \frac{1}{2\alpha - 1}n(\log(n)+1).
$$
\label{cor:star}
\end{corollary}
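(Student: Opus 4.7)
The plan is to obtain this corollary as a direct composition of two results already established in the paper: the spectral lower bound $\delta(Q,\alpha)\geq (2\alpha-1)/n$ for the star network given in Lemma~\ref{thm:star}, and the general per-phase convergence bound $\E(T_i)\leq \frac{1}{\delta(Q,\alpha)}(\log n + 1)$ from Theorem~\ref{thm:genbound}. First I would fix $i\in\{1,2\}$ and invoke Theorem~\ref{thm:genbound} to get $\E(T_i)\leq (\log n + 1)/\delta(Q,\alpha)$. Then I would substitute the star-network bound from Lemma~\ref{thm:star}, giving $1/\delta(Q,\alpha)\leq n/(2\alpha-1)$, which immediately yields the claimed inequality $\E(T_i)\leq \frac{n}{2\alpha-1}(\log n + 1)$.

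Because Lemma~\ref{thm:star} actually provides the exact value $\delta(Q,\alpha)=\frac{n}{2(n-1)}\bigl(1-\sqrt{1-4(2\alpha-1)/n}\bigr)$, I have a choice between substituting this sharper expression (giving a slightly tighter but less clean bound) or using the simpler weakened form. I would use the weakened form $\delta(Q,\alpha)\geq (2\alpha-1)/n$ since it yields the closed-form expression stated in the corollary and is asymptotically tight for large $n$ by the last sentence of Lemma~\ref{thm:star}. No other estimates are needed.

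There is no genuine obstacle in this proof: the analytical work lives entirely in Lemma~\ref{thm:star} (where the spectral structure of $Q_S$ for the star graph must be analyzed explicitly) and in Theorem~\ref{thm:genbound} (where the piecewise-linear dynamics and the exponential decay of $\|\E(Y(t))\|_2$ are established). Once those are granted, the corollary is a two-line substitution valid for every $n>1$ and every fixed $\alpha\in(1/2,1]$.
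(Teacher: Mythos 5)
Your proposal is correct and is exactly the paper's own route: the corollary is obtained by substituting the bound $\delta(Q,\alpha)\geq(2\alpha-1)/n$ from Lemma~\ref{thm:star} into the per-phase estimate $\E(T_i)\leq(\log n+1)/\delta(Q,\alpha)$ of Theorem~\ref{thm:genbound}. Your remark that one could instead carry the exact expression for $\delta(Q,\alpha)$ is fine but unnecessary, as the paper likewise opts for the weakened closed form.
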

\begin{figure*}[t]
\begin{center}
\vspace*{-4cm}
\hspace*{-1cm}\psfig{figure=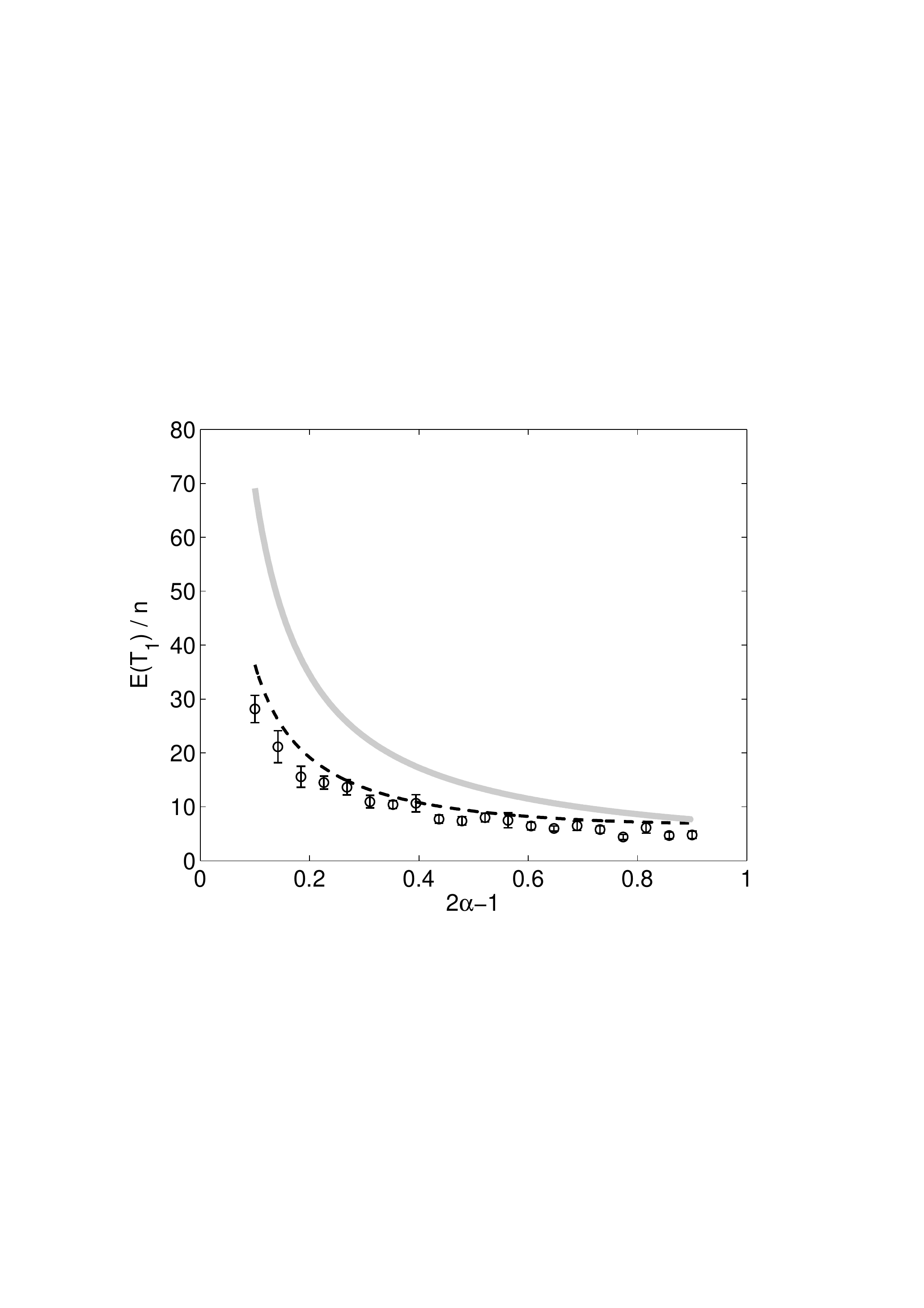,height=5.5in,width=3.3in}\hspace*{-1.9cm}
\hspace*{0cm}\psfig{figure=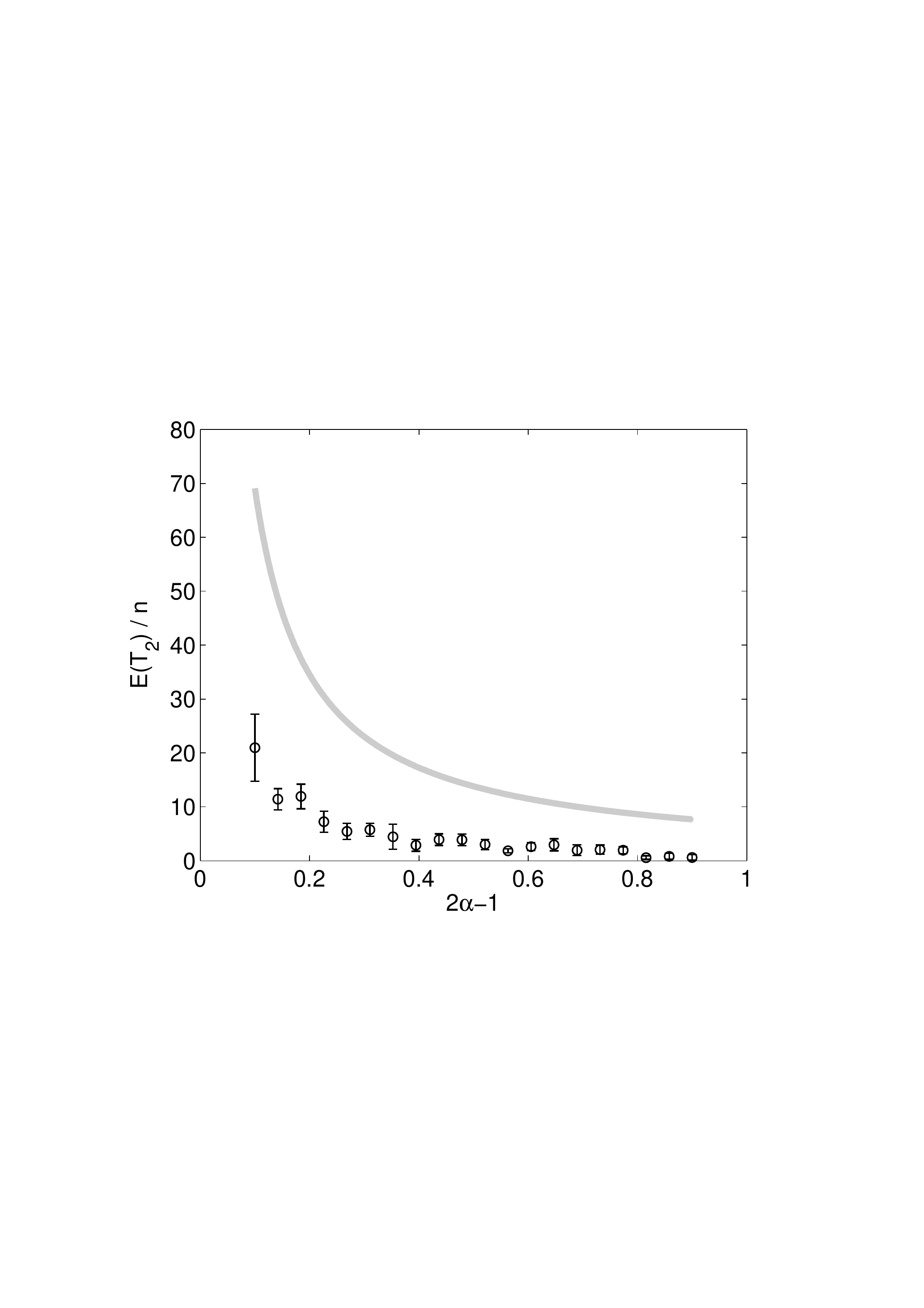,height=5.5in,width=3.3in}
\vspace*{-4.5cm}
\caption{Star-shaped network: expected duration of convergence phases versus the voting margin $2\alpha-1$, for $n=1000$. The solid curves indicate $\log(n)/(2\alpha-1)$; the dashed line indicates $\log(n)/[(2\alpha-1)(3-2\alpha)]$; the bars indicate $95\%$-confidence intervals of estimates obtained by simulations.}
\label{fig:starN}
\end{center} 
\end{figure*}
\paragraph{Comparison with an alternative analysis for phase~1} For the star-shaped network of $n$ nodes, we can compute the exact asymptotically dominant term of the expected duration of phase~1, for large $n$, which is presented in the following proposition.
\begin{proposition} For the star-shaped network of $n$ nodes, the expected time to deplete nodes in state $1$ satisfies
\begin{equation}
\E(T_1) = \frac{1}{(2\alpha-1)(3-2\alpha)}n\log(n) + O(n).
\label{equ:starlimit}
\end{equation}
\label{pro:star-phase1}
\end{proposition}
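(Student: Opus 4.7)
My plan exploits the structure of the star: every contact involves the hub, so the count $b(t) = |S_1(t)|$ can only decrease when Rule~1 fires, i.e.\ the hub (in state $0$) meets a leaf in state $1$, or vice versa. Writing
$$
T_1 = \sum_{k=0}^{(1-\alpha)n-1} \tau_k,
$$
where $\tau_k$ is the waiting time between the $k$-th and $(k{+}1)$-st Rule-1 firing, I observe that between two consecutive firings the totals $(|S_0|,|S_1|,|S_{e_0}|+|S_{e_1}|) = (\alpha n-k,(1-\alpha)n-k,2k)$ are frozen, while only the hub's state and the $(e_0,e_1)$ split among leaves evolve.

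First I would compute the quasi-stationary distribution $\pi$ of the hub on $\{0,1,e_0,e_1\}$ for fixed leaf proportions $p_0,p_1,p_{e_0},p_{e_1}$. The per-contact transition kernel of the hub is read off directly from Rules~1--6 (e.g.\ from state $0$, with probability $p_1$ the hub moves to $e_1$ via Rule~1, with probability $p_{e_0}+p_{e_1}$ it moves to $e_0$ via Rules~4 and~3, and with probability $p_0$ nothing happens). Solving the resulting balance equations gives
$$
\pi_0 = \pi_*\,\frac{p_0}{1-p_0},\quad \pi_1 = \pi_*\,\frac{p_1}{1-p_1},\quad \pi_* := \pi_{e_0}+\pi_{e_1} = \frac{(1-p_0)(1-p_1)}{1-p_0 p_1}.
$$
Since the hub contacts leaves at unit rate, the rate of Rule-1 firings in quasi-stationarity equals
$$
r(p_0,p_1) \;=\; \pi_0 p_1 + \pi_1 p_0 \;=\; \frac{p_0 p_1 (2-p_0-p_1)}{1-p_0 p_1}.
$$

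Second, I would justify $\E(\tau_k) = 1/r\bigl(\alpha-k/n,(1-\alpha)-k/n\bigr) + O(1/n)$ by writing the $4\times 4$ linear system for the expected hitting time of the Rule-1 event starting from each of the four possible hub states (with the leaf counts treated as parameters). The system admits a closed-form solution, and comparing its leading term with $1/r$ yields an $O(1/n)$ correction uniform in $k$. Substituting $p_0-p_1 = 2\alpha-1$, $p_0+p_1 = 1-2k/n$, and passing to a Riemann-sum approximation, I obtain
$$
\E(T_1) \;=\; n\int_0^{1-\alpha-1/n} \frac{1-(\alpha-s)(1-\alpha-s)}{(\alpha-s)(1-\alpha-s)(1+2s)}\,ds \;+\; O(n).
$$
A partial-fraction decomposition of the integrand at its three simple poles $s=\alpha$, $s=1-\alpha$ and $s=-1/2$ isolates a single logarithmic divergence, arising from $s=1-\alpha$ with residue $1/((2\alpha-1)(3-2\alpha))$; the other two poles lie outside the integration interval and contribute only $O(1)$. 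Evaluating gives
$$
\int_0^{1-\alpha-1/n}\!\!\cdots\,ds = \frac{\log n}{(2\alpha-1)(3-2\alpha)} + O(1),
$$
and multiplying by $n$ delivers the asserted leading term.

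The hardest part is the error control in the second step: the quasi-stationary approximation must carry an $O(1/n)$ per-firing error that sums to at most $O(n)$ over the $(1-\alpha)n$ firings. This is delicate in the regime $b = \Theta(n)$, where the hub's mixing time and the inter-firing time are both $\Theta(1)$ so there is no timescale separation; the closed-form solution of the $4\times 4$ hitting-time linear system is what makes the relative-error bound uniform in $k$. The complementary regime $b = O(1)$ is easier, since there $r \sim (2\alpha-1)(3-2\alpha)\,b/n$ and the hub equilibrates many times before the next firing, yielding the harmonic sum $\sum_{b=1}^{\Theta(n)} n/\bigl((2\alpha-1)(3-2\alpha)\,b\bigr)$ that produces the $n\log n$ factor and matches the residue calculation above.
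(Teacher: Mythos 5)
Your proposal is correct and, at its core, follows the same reduction as the paper's proof: decompose $T_1$ over the depletion events, note that between events the aggregate counts are frozen so that (after lumping $e_0$ and $e_1$, whose split is irrelevant) the hub performs a small Markov chain, and compute the expected inter-event time from the closed-form solution of a hitting-time linear system. The genuinely different element is how you obtain the per-mode expectation: the paper solves the first-step equations directly, getting $\varphi_e(i)=(n-1)\frac{n^2-x_0^ix_1^i}{x_0^ix_1^i(n+x_e^i)}$, and sums exactly via harmonic numbers, whereas you go through the stationary distribution of the hub chain and the firing rate $r(p_0,p_1)$, then evaluate a Riemann integral by residues. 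A useful observation is that these two routes coincide for a structural reason that makes your ``hardest part'' disappear: every Rule-1 firing leaves the hub in the lumped state $e$, so within a mode the inter-firing times are exactly i.i.d.\ hitting times from $e$, and elementary renewal theory for the stationary hub chain gives that this hitting time equals $1/r$ \emph{exactly} --- indeed your $\frac{1-p_0p_1}{p_0p_1(2-p_0-p_1)}$ equals $\frac{n}{n-1}\varphi_e(i)$ under $p_j=x_j^i/n$. Hence no quasi-stationarity or timescale-separation argument is needed even when $b=\Theta(n)$; the only corrections are the $O(1/n)$ relative discretization error (summing to $O(\log n)$), the single $k=0$ term where the hub starts in state $0$ or $1$ rather than $e$ (an $O(1)$ effect, cf.\ $\varphi_0(0),\varphi_1(0)$ in the paper), and the sum-versus-integral error, all absorbed in $O(n)$. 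Your residue at $s=1-\alpha$ correctly yields the constant $\frac{1}{(2\alpha-1)(3-2\alpha)}$, so the asserted leading term follows. What your route buys is a transparent interpretation of the constant as a stationary collision rate; what the paper's buys is an exact finite-$n$ formula with no approximation bookkeeping.
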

The proof is provided in Appendix~\ref{sec:star-phase1}.\\
Notice that the dominant term in Proposition~\ref{pro:star-phase1} is smaller than the upper bound in Corollary~\ref{cor:star} for the factor $1/(3-2\alpha)$.
\paragraph{Remark} We only consider the expected convergence time for phase~1. Similar analysis could be pursued for phase~2 but is more complicated, because the lumping of the states as done in the proof for phase~1 cannot be made.\\
Finally, we compare our bound with simulation results in Figure~\ref{fig:starN}. The results indicate that the bound of Corollary~\ref{cor:star} is not tight. We also observe that the asymptote in Proposition~\ref{pro:star-phase1} conforms well with simulation results.
\subsection{Erd\" os-R\' enyi Random Graphs}\label{sec:er}
We consider random graphs for which the matrix of contact rates $Q$ is defined as follows. Given a parameter $p_n \in (0,1)$ that corresponds to the probability that a pair of nodes interact with a strictly positive rate, we define the contact rate of a pair of nodes $i,j\in V$, $i\neq j$, as follows
$$
q_{i,j} = \frac{1}{(n-1)p_n}X_{i,j}
$$
where $X_{i,j}$ is a sequence of i.i.d. random variables such that $\P(X_{i,j}=1) = 1-\P(X_{i,j} = 0) = p_n$, for every $i,j\in V$, $j\neq i$. The rates are normalized with the factor $1/(n-1)p_n$, so that for each node, the interaction rate with other nodes is $1$.

Furthermore, we assume that $p_n$ is chosen such that, for a constant $c > 1$,
$$
p_n = c\frac{\log(n)}{n}
$$
which ensures that the induced random graph is connected with high probability. \\
We have the following lemma.
\begin{lemma} Suppose $c > \frac{2}{2\alpha-1}$ and $\alpha \in (1/2,1]$. We then have, with high probability that
\begin{equation}
\delta(Q,\alpha)\geq (2\alpha-1)\varphi^{-1}\left(\frac{2}{c(2\alpha-1)}\right) + O\left(\frac{1}{\log n}\right),
\label{equ:invd}
\end{equation}
where $\varphi^{-1}(\cdot)$ is the inverse function of $\varphi(x) = x\log(x) + 1 - x$, for $x \in [0,1]$.
\label{lem:er}
\end{lemma}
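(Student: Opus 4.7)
The plan is to identify $|\lambda_{Q_S}|$ as the smallest eigenvalue of a submatrix of the weighted graph Laplacian, and then apply a matrix Chernoff inequality to a natural decomposition into independent rank-one summands indexed by the Erd\"os--R\'enyi edge variables $X_{ij}$. By the calculation in the proof of Lemma~\ref{lem:spec}, $|\lambda_{Q_S}| = \lambda_{\min}(-M_S)$, where $-M_S$ is the $|S^c|\times|S^c|$ principal submatrix of the weighted Laplacian $L := \sum_{i<j} q_{ij}(e_i-e_j)(e_i-e_j)^T$. Writing $L^{(S^c)}_{ij}$ for the restriction of the edge Laplacian $(e_i-e_j)(e_i-e_j)^T$ to rows and columns in $S^c$, one has
$$
-M_S \;=\; \sum_{1 \le i < j \le n} \frac{X_{ij}}{(n-1)p_n}\, L^{(S^c)}_{ij},
$$
a sum of independent positive semidefinite matrices each of spectral norm at most $R := 2/((n-1)p_n)$.

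Next, because $\E[X_{ij}]=p_n$, the matrix $\E[-M_S]$ coincides with the Dirichlet Laplacian for the complete graph with edge weights $1/(n-1)$, so by the calculation in the proof of Lemma~\ref{lem:comp1} its smallest eigenvalue is $\mu_{\min} := (2\alpha-1)n/(n-1)$. The critical ratio is $\mu_{\min}/R = (2\alpha-1)(n p_n)/2 = (2\alpha-1)(c/2)\log n$. Tropp's matrix Chernoff inequality (lower tail) then yields, for every $\theta \in (0,1)$,
$$
\P\bigl(\lambda_{\min}(-M_S) \le \theta\mu_{\min}\bigr) \;\le\; |S^c|\cdot\exp\!\bigl(-(\mu_{\min}/R)\,\varphi(\theta)\bigr) \;=\; O(n)\cdot n^{-(2\alpha-1)c\,\varphi(\theta)/2}.
$$
This exactly explains the formula in the statement: setting $\theta=\varphi^{-1}\!\bigl(2/(c(2\alpha-1))\bigr)$ makes $(2\alpha-1)c\varphi(\theta)/2 = 1$, and a downward shift of $\theta$ by an amount $\Theta(1/\log n)$ produces the stated $O(1/\log n)$ correction (since $\varphi'$ is bounded away from zero on the relevant interval). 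Multiplying $\theta$ by $\mu_{\min} = (2\alpha-1) + O(1/n)$ then gives the claimed lower bound on $|\lambda_{Q_S}|$.

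The main obstacle is upgrading this pointwise-in-$S$ statement to a uniform bound over all $S$ with $|S|=(2\alpha-1)n$, which the definition of $\delta(Q,\alpha)$ requires (Lemma~\ref{lem:spec} already uses Cauchy interlacing to restrict to this single cardinality). The per-$S$ failure probability from matrix Chernoff is only polynomial in $1/n$, while $\binom{n}{(2\alpha-1)n}$ is exponential in $n$, so a direct union bound cannot close the gap. I would close it by exploiting the strong cut concentration available when $p_n \gg \log(n)/n$: the random variables $e(S,S^c) \sim \Bin(|S||S^c|, p_n)$ are exponentially concentrated at the scale $n\log n$, sharp enough to survive the $e^{\Theta(n)}$ union bound and to give uniform control of the Rayleigh quotient of $-M_S$ in the direction ${\bf 1}_{S^c}$. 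Combining this uniform cut concentration with a Cheeger-type bound on the Dirichlet eigenvalue (or with a covering argument on the sphere followed by matrix Chernoff) then propagates the per-$S$ matrix concentration to all $S$ simultaneously, at the price of an additional $o(1)$ term absorbable into the stated $O(1/\log n)$.
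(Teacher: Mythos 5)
Your reduction to the Dirichlet submatrix $-M_S$ of the weighted Laplacian is correct and consistent with the proof of Lemma~\ref{lem:spec}, but from there the paper takes a much more elementary route than matrix Chernoff, and the difference is not cosmetic: it changes the constant. The paper simply drops the nonnegative term $\frac{1}{2}\sum_{i,j\in S^c}q_{i,j}(x_i-x_j)^2$ from the quadratic form (\ref{equ:quad}) to obtain $|\lambda_{Q_S}|\geq\min_{i\in S^c}\sum_{j\in S}q_{i,j}$, and then applies a scalar Chernoff bound to each of the $|S^c|=O(n)$ binomial row sums $\sum_{j\in S}X_{i,j}$, whose summands have size $1/((n-1)p_n)$; this gives failure probability $O(n)\exp\left(-(2\alpha-1)c\log(n)\,\varphi(\theta)\right)$ and hence the threshold $\varphi(\theta)=\frac{2}{c(2\alpha-1)}$, with one factor $\frac{1}{c(2\alpha-1)}$ paying for the union bound over nodes and one for driving the probability down to $1/n$. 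Your matrix Chernoff bound instead pays $R=2/((n-1)p_n)$ because the edge Laplacians internal to $S^c$ have spectral norm $2$, so your exponent is $(2\alpha-1)(c/2)\log(n)\,\varphi(\theta)$, a factor of $2$ smaller. At your chosen $\theta=\varphi^{-1}(2/(c(2\alpha-1)))$ the failure bound is $O(n)\cdot n^{-1}=O(1)$, not $o(1)$; and shifting $\theta$ down by $\Theta(1/\log n)$ only changes $\varphi(\theta)\log n$ by $\Theta(1)$, i.e., multiplies that $O(1)$ bound by a constant. To actually get a vanishing failure probability your argument needs $\varphi(\theta)\geq\frac{4}{c(2\alpha-1)}+\omega(1/\log n)$, so it proves the lemma only with $\varphi^{-1}(4/(c(2\alpha-1)))$ in place of $\varphi^{-1}(2/(c(2\alpha-1)))$, and only under the stronger hypothesis $c>4/(2\alpha-1)$. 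As written, the proposal therefore does not establish the stated inequality.

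On uniformity over $S$: you are right that a per-$S$ failure probability that is only polynomial in $1/n$ cannot survive a union bound over $\binom{n}{(2\alpha-1)n}$ sets, and it is fair to note that the paper's own proof does not attempt such a union bound either --- it fixes $S$, shows $\bar p_e\leq 1/n$, and stops. However, your proposed repair would not rescue the statement at the claimed constant: a Cheeger-type lower bound on the Dirichlet eigenvalue obtained from uniform cut concentration degrades quadratically and cannot reproduce $(2\alpha-1)\varphi^{-1}(2/(c(2\alpha-1)))$, while a covering argument on the unit sphere in dimension $|S^c|=\Theta(n)$ requires $e^{\Theta(n)}$ net points against a merely polynomial per-point tail, so it fails for exactly the reason the union bound over $S$ fails. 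Any genuinely uniform-over-$S$ version in the regime $p_n=c\log(n)/n$ would have to absorb a $\log\binom{n}{|S|}=\Theta(n)$ entropy term against an exponent of order $\log n$, which is hopeless; the bound the paper actually proves (and uses) is the fixed-$S$ one, and that is the statement you should target with the scalar row-sum argument above.
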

\begin{figure*}[t]
\begin{center}
\vspace*{-3.5cm}
\hspace*{-1cm}\psfig{figure=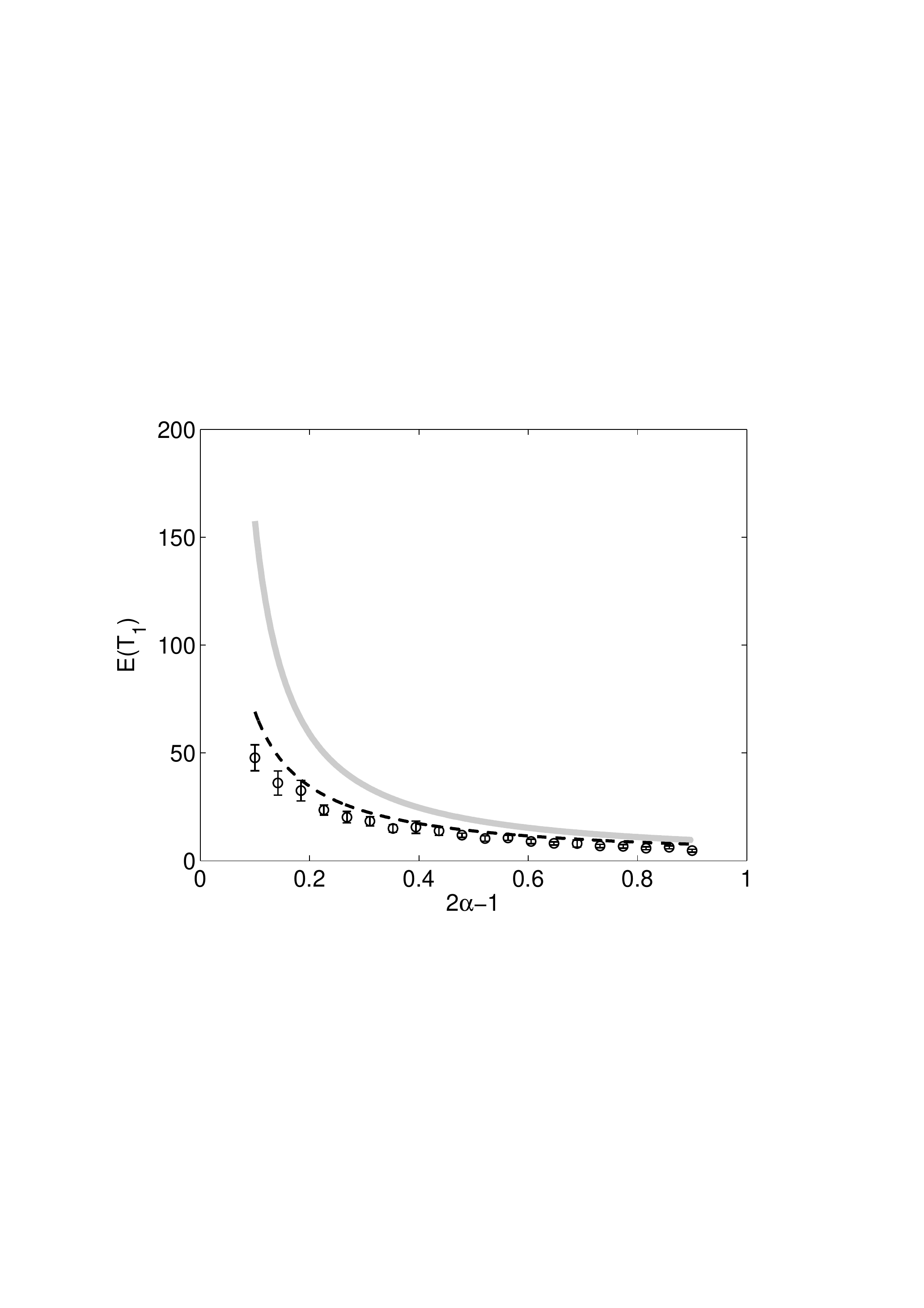,height=5.5in,width=3.3in}\hspace*{-1.9cm}
\hspace*{0cm}\psfig{figure=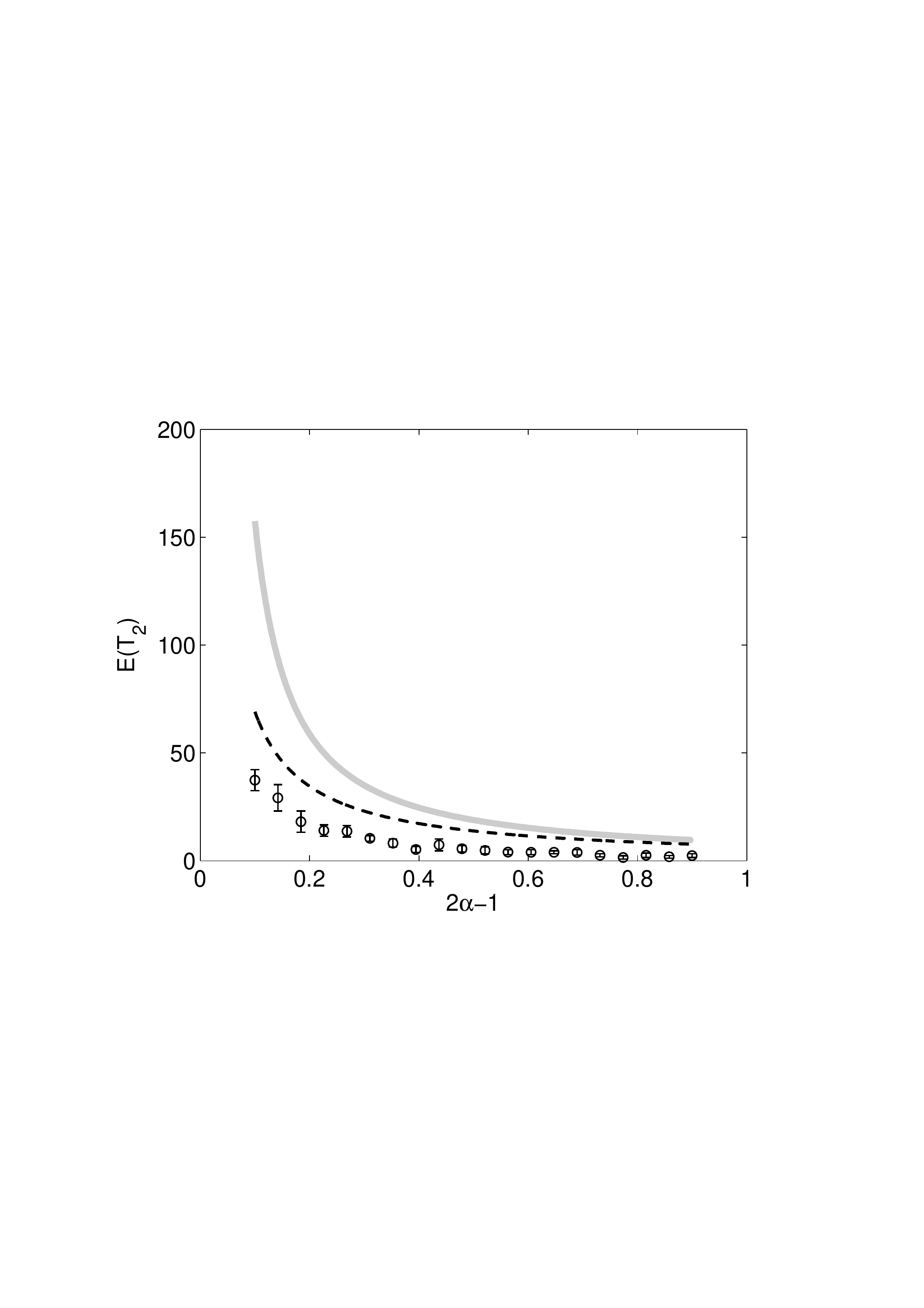,height=5.5in,width=3.3in}
\vspace*{-4cm}
\caption{Erd\" os-R\' enyi random graphs: the expected duration of convergence phases vs. the voting margin $2\alpha-1$, for $n=1000$ and $c=100$. The solid curves indicate the bound of Corollary~\ref{equet2}; the dashed lines indicate $\log(n)/(2\alpha-1)$; the bars indicate $95\%$-confidence estimates.}
\label{fig:erN}
\end{center} 
\end{figure*}
The proof is provided in Appendix~\ref{pro:er}.\\
From the last lemma and Theorem~\ref{thm:genbound}, we have the following corollary.\begin{corollary} Under $c > \frac{2}{2\alpha - 1}$ and $\alpha \in (1/2,1]$, we have for the duration of phase $i=1$ and $2$, 
\begin{equation}
T_i \leq \frac{1}{(2\alpha-1)\varphi^{-1}\left(\frac{2}{c(2\alpha-1)}\right)}\log(n) + O(1)
\label{equet2}
\end{equation}
with high probability.\\
\label{cor:er}
\end{corollary}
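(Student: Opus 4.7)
The plan is to obtain the corollary as an essentially routine combination of the spectral lower bound for $\delta(Q,\alpha)$ established in Lemma~\ref{lem:er} (for Erd\"os-R\'enyi graphs) with the general expected-convergence-time bound of Theorem~\ref{thm:genbound}. The only real work is to verify that the $O(1/\log n)$ error in the denominator coming from Lemma~\ref{lem:er} translates to an additive $O(1)$ error in the final bound, and to handle the ``with high probability'' qualifier correctly.

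More concretely, first I would introduce the constant $D := (2\alpha-1)\,\varphi^{-1}\!\left(\frac{2}{c(2\alpha-1)}\right)$ and check that $D>0$ under the hypothesis $c>2/(2\alpha-1)$; this follows because the argument $2/(c(2\alpha-1))$ then lies in $(0,1)$, on which $\varphi(x)=x\log x+1-x$ is strictly decreasing from $1$ to $0$, so $\varphi^{-1}$ is well-defined and strictly positive there. Next I would introduce the event $\mathcal{E}_n$, measurable with respect to the random edge indicators $\{X_{i,j}\}$, on which the conclusion of Lemma~\ref{lem:er} holds, namely $\delta(Q,\alpha)\geq D + \eta_n$ with $\eta_n = O(1/\log n)$. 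By Lemma~\ref{lem:er}, $\P(\mathcal{E}_n)\to 1$, which is the source of the ``with high probability'' qualifier in the statement.

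On the event $\mathcal{E}_n$, Theorem~\ref{thm:genbound} applied to the realized (deterministic) graph gives, for $i=1,2$,
$$
\E(T_i \mid G) \;\leq\; \frac{\log n + 1}{\delta(Q,\alpha)} \;\leq\; \frac{\log n + 1}{D + \eta_n}.
$$
A first-order expansion of $1/(D+\eta_n)$ around $\eta_n=0$ yields $1/(D+\eta_n) \leq 1/D + O(1/\log n)$, because $D$ is a strictly positive constant and $\eta_n=O(1/\log n)$. Multiplying by $\log n + 1$ then produces
$$
\frac{\log n + 1}{\delta(Q,\alpha)} \;\leq\; \frac{\log n}{D} + O(1),
$$
which is exactly the inequality in~\eqref{equet2}. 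Thus on the high-probability graph event $\mathcal{E}_n$, the conditional expected duration of each phase satisfies the asserted bound, which is the content of the corollary under the standard reading of ``with high probability'' (over the graph realization).

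I do not anticipate a serious obstacle here, since all the heavy lifting has already been done: the spectral analysis lives inside Lemma~\ref{lem:er}, and the reduction from the spectral gap to the convergence time lives inside Theorem~\ref{thm:genbound}. The one point that requires a small amount of care is the bookkeeping of the error term: keeping $\eta_n = O(1/\log n)$, rather than merely $o(1)$, is exactly what makes $(\log n+1)\cdot \eta_n/D^2 = O(1)$ and allows the additive $O(1)$ (rather than a worse $o(\log n)$) correction in the final display. If a version of the statement holding for $T_i$ itself (rather than $\E(T_i\mid G)$) were required, one could further apply Markov's inequality on $\mathcal{E}_n$ to absorb the randomness of the interaction process, at the cost of an arbitrarily slowly growing multiplicative constant.
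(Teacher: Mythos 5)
Your proposal is correct and follows exactly the route the paper takes: the paper offers no explicit proof beyond the sentence ``From the last lemma and Theorem~\ref{thm:genbound}, we have the following corollary,'' and your argument is precisely that combination, with the $O(1/\log n)$-to-$O(1)$ bookkeeping and the conditioning on the high-probability graph event spelled out more carefully than the paper does. Your closing remark about the gap between a bound on $\E(T_i\mid G)$ and a bound on $T_i$ itself is a fair observation about an imprecision in the corollary's statement that the paper leaves unaddressed.
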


\paragraph{Remark} We note the following intuitive observation: the asserted bound for the expected convergence time for each of the phases boils down to that of the complete graph, for large expected degree of a node, i.e. large $c$. Indeed, this holds because for every fixed $\alpha \in (1/2,1]$, the term $\varphi^{-1}(\frac{2}{c(2\alpha-1)})$ goes to $1$ as $c$ grows large.\\

Finally, we compare the bound of Corollary~\ref{cor:er} with estimates obtained by simulations in Fig.~\ref{fig:erN}. The results confirm that the bound is indeed a bound and that it is not tight, which is because of a bounding technique that we used in the proof. 
  
\section{Conclusion}
\label{sec:conc}

We established an upper bound on the expected convergence time of the binary interval consensus that applies to arbitrary connected graphs. We showed that for a range of particular graphs, the bound is of exactly the same order as the expected convergence time with respect to the network size. The bound provides insights into how the network topology and the voting margin affect the expected convergence time. In particular, we showed that there exist network graphs for which the expected convergence time becomes much larger when the voting margin approaches zero. The established bound provides a unifying approach to bound the expected convergence time of binary interval consensus on arbitrary finite and connected graphs.

An important direction of future work is to consider lower bounds on the convergence time. In particular, it would be of interest to better understand how to fine tune the interaction parameters $q_{ij}$ to achieve the best possible convergence time for a given connected graph  under given memory and communication constraints.

\section*{Ackowledgement} MD is supported by QNRF through grant NPRP 09-1150-2-448. MD holds a Leverhulme Trust Research Fellowship RF/9/RFG/2010/02/08. 

\medskip

\appendix
\section{Characterization of $\delta$ for Particular Graphs}\label{sec:delta}

\subsection{Proof of Lemma~\ref{thm:path} (Path)}\label{sec:delta-path} Recall that the matrix $Q$ is the tridiagonal matrix with $q_{i,i+1}=1$, for $i=1,2,\ldots,n-1$, $q_{i,i-1}=1$, for $i=2,3,\ldots,n$, and all other elements equal to $0$.

We will separately consider four cases depending on whether the respective end-node $1$ and $n$ is in $S$ or $S^c$. In the following, we denote with $\xi_A(\lambda)$, the characteristic polynomial of a matrix $A$.\\

\noindent \emph{Case 1}: $1\in S$ and $n\in S^c$. In this case, we repeatedly expand the matrix $\lambda I - Q_S$ along the rows $i\in S$, using Laplace's formula, to obtain 
\begin{equation}
\xi_{Q_{S}}(\lambda)= (\lambda+1)(\lambda + 2)^{|S|-1}\xi_B(\lambda)
\label{equ:p1}
\end{equation}
where the matrix $B$ is a block-diagonal matrix with blocks $B_1$, $B_2$, \ldots, $B_b$, for $1 < b \leq |S|$ that are symmetric tridiagonal matrices of the form 
\begin{equation}
\left( \begin{array}{lllllll}
-2 + c_1 &  1 &  0  & 0 & \cdots & 0 & 0\\
1  & -2  & 1  & 0 & \cdots & 0 & 0\\
0  &  1  & -2 & 1 & \cdots & 0 & 0\\
\cdots & \cdots & \cdots & \cdots & \cdots & \cdots & \cdots\\
0 & 0 & 0 & 0 & \cdots & 1 & -2 + c_2
\end{array} \right)
\label{equ:Btridiagonal}
\end{equation}
where $c_1 = c_2 = 0$, for $i = 1,2,\ldots,b-1$ (type 1), and $c_1 = 0$ and $c_2 = 1$, for $i = b$ (type 2).

Since $B$ is a block-diagonal matrix, notice that $\xi_B(\lambda) = \prod_{i=1}^b \xi_{B_i}(\lambda)$. Hence, together with (\ref{equ:p1}), we have that $Q_S$ the largest eigenvalue of a block that is either of type $1$ or type $2$.  

The eigenvalues of tridiagonal matrices of the form (\ref{equ:Btridiagonal}) are well known for some values of the parameters $c_1$ and $c_2$, see e.g. \cite{Y05}. In particular, for a $m\times m$ tridiagonal matrix of type~1, i.e. for $c_1 = c_2 = 0$, we have eigenvalues
\begin{equation}
\lambda_k = -2 \left(1-\cos\left(\frac{\pi k}{m+1}\right)\right), \ k = 1,2,\ldots,m.
\label{equ:ltype1}
\end{equation}
For a $m\times m$ tridiagonal matrix of type~2, i.e. for $c_1 = 0$ and $c_2 = 1$, we have eigenvalues
$$
\kappa_k = -2\left(1-\cos\left(\frac{(2k-1)\pi}{2m+1}\right)\right),\ k = 1,2,\ldots,m.
$$

It is readily checked that the largest eigenvalue is $\kappa_1$ with $m = |S^c|$. This corresponds to the case where the nodes in the set $S$ are $1,2,\ldots,|S|$ (i.e. form a cluster). \\

\noindent \emph{Case~2}: $1\in S^c$ and $n\in S$. In this case, we have that (\ref{equ:p1}) holds but the blocks of matrix $B$ redefined so that $c_1 = 1$ and $c_2 = 0$, for $B_1$, and $c_1 = c_2 = 0$, for $B_2$, $B_3$, $\ldots$, and $B_b$. It is readily observed that the eigenvalues of the block matrix $B_1$ are the same as for $c_1 = 0$ and $c_2 = 1$ (type~1 tridiagonal matrix in Case~1). Hence, the largest eigenvalue is same as under Case~1. Notice that in this case, it corresponds to taking nodes $n-|S|+1$, $n-|S|+2$, $\ldots$, and $n$ to be in the set $S$.\\

\noindent \emph{Case~3}: $1\in S^c$ and $n\in S^c$. In this case, by the same arguments as in Case~1, we have
$$
\xi_{Q_S}(\lambda)= (\lambda + 2)^{|S|}\xi_B(\lambda)
\label{equ:p2}
$$
where $B$ is a block-diagonal matrix with blocks $B_1$, $B_2$, \ldots, $B_b$, $1 < b \leq |S^c|$, which are of the form (\ref{equ:Btridiagonal}) such that $c_1 = 1$ and $c_2 = 0$ for $B_1$, $c_1 = c_2 = 0$, for $B_2$, $B_3$, $\ldots$, and $B_b$, and $c_1 = 0$ and $c_2 = 1$, for $B_b$. In this case, the largest eigenvalue is $\kappa_1$ with $m = |S^c| - 1$.\\

\noindent \emph{Case~4}: $1\in S$ and $n\in S$. In this case, we have
$$
\xi_{Q_S}(\lambda)= (\lambda+1)^2(\lambda + 2)^{|S|-2}\xi_B(\lambda)
\label{equ:p3}
$$
where $B$ is a block-diagonal matrix with blocks $B_1$, $B_2$, \ldots, $B_b$, $1 < b \leq |S^c|$ that are all of the form (\ref{equ:Btridiagonal}) with $c_1 = c_2 = 0$. In this case, the largest eigenvalue is $\lambda_1$ with $m = |S^c|$.\\
Finally, we observe that for each of the four cases, since $|S^c| \leq 2(1-\alpha)n$, we can take $\delta(Q,\alpha)$ as asserted in the lemma, which completes the proof. 

\subsection{Proof of Lemma~\ref{thm:cycle} (Cycle)}\label{sec:delta-cycle} The proof is similar to that for a path in Section~\ref{sec:delta-path}. We will see that for the cycle, we will deal with blocks of tridiagonal matrices of the form (\ref{equ:Btridiagonal}) with $c_1 = c_2 = 0$. Recall that for the cycle, we have matrix $Q$ such that $q_{i,i+1}=1$, for $i = 1,2,\ldots,n-1$, $q_{i-1,i}=1$, for $i=2,3,\ldots,n$, $q_{1,n}=q_{n,1} = 1$, and all other elements equal to $0$.\\
Again, we separately consider the following four cases.

\noindent \emph{Case~1}: $1\in S$ and $n\in S^c$. By successive expansion along rows $i\in S$, we obtain
\begin{equation}
\xi_{Q_S}(\lambda) = (\lambda+2)^{|S|}\xi_B(\lambda)
\label{equ:xicycle}
\end{equation}
where $B$ is a block-diagonal matrix with blocks of the form (\ref{equ:Btridiagonal}) with $c_1=c_2 = 0$ (referred to as type $1$). Since $B$ is a block-diagonal matrix, we have $\xi_B(\lambda) = \prod_{i=1}^b \xi_{B_i}(\lambda)$, $1< b\leq |S^c|$, where $B_i$ is a matrix of type~1. The largest eigenvalue is $\lambda_1$, given in (\ref{equ:ltype1}), for $m = |S^c|$.\\

\noindent \emph{Case~2}: $1\in S^c$ and $n\in S$. In this case, the same arguments hold as in Case~1.

\noindent \emph{Case~3}: $1\in S^c$ and $n\in S^c$. In this case, (\ref{equ:xicycle}) holds, with matrix $B$ with diagonal blocks and other elements as in Case~1, except that $b_{1,|S^c|} = b_{|S^c|,1} = 1$. This matrix can be transformed into a block-diagonal matrix of the same form as in Case~1 by permuting the rows of the matrix, hence, it has the same spectral properties as the matrix $B$ under Case~1. Specifically, this can be done by moving the block of rows that correspond to $B_1$ to the bottom of the matrix $B$.

\noindent \emph{Case~4}: $1\in S$ and $n\in S$. In this case, the same arguments apply as in Case~1.

Finally, we observe that for each of the four cases, since $|S^c|\leq 2(1-\alpha)$, we can take $\delta(Q,\alpha)$ as asserted in the lemma, which completes the proof.

\subsection{Proof of Lemma~\ref{thm:star} (Star)}\label{sec:delta-star} We separately consider the two cases for which either the hub is in the set $S$ or not.
\paragraph{Case~1} Suppose that the hub is in the set $S$, i.e. $1\in S$. It is easy to observe that in this case, the matrix $Q_S$ is a triangular matrix with all upper diagonal elements equal to $0$, and the diagonal elements equal to $(-1,-\frac{1}{n-1},\ldots,-\frac{1}{n-1})$. Hence, the largest eigenvalue is $-\frac{1}{n-1}$.
\paragraph{Case~2} Suppose now that the hub is not in the set $S$, i.e. $1\in S^c$. If $\lambda$ is an eigenvalue of $Q_S$ with an eigenvector $\vec{x}$, then we have
\begin{eqnarray*}
\lambda x_1 &=& - x_1+\frac{1}{n-1}\sum_{i\in S^c\setminus\{1\}} x_i\\
\lambda x_i &=& -\frac{x_i}{n-1}  +\frac{x_1}{n-1},\ \hbox{ for } i\in S^c\setminus\{1\}\\
\lambda x_i &=& -\frac{1}{n-1}x_i, \ \hbox{ for } i \in S.
\end{eqnarray*}
This implies
\begin{eqnarray*}
\lambda x_1 &=& - x_1+\frac{1}{n-1}\sum_{i\in S^c\setminus\{1\}} x_i\\
x_1 &=& ((n-1)\lambda+1) x_i ,\ \hbox{ for } i\in S^c\setminus\{1\}\\
\lambda x_i &=& -\frac{1}{n-1}x_i, \ \hbox{ for } i \in S.
\end{eqnarray*}
Suppose that $\vec{x}$ is such that $x_i = 0$, for every $i\in S$. From the last above identities, it readily follows that $\lambda$ is a solution of the quadratic equation
$$
\lambda^2 + \frac{n}{n-1}\lambda + \frac{1}{n-1}\left(1-\frac{|S^c|-1}{n-1}\right) = 0.
$$
It is straightforward to show that the two solutions are
$$
\lambda_1 = -\frac{1}{2}\frac{n}{n-1}\left(1-\sqrt{1-\frac{4|S|}{n^2}}\right) \hbox{ and } \lambda_2 = -\frac{1}{2}\frac{n}{n-1}\left(1+\sqrt{1-\frac{4|S|}{n^2}}\right).
$$
Clearly, the largest eigenvalue is $\lambda_1$ and since $ |S|\geq (2\alpha-1)n$, it is maximized for $|S|=(2\alpha-1)n$. \\
Finally, we note that the largest eigenvalue is attained in Case~2, which establishes the first equality in the lemma, from which the asserted inequality and its tightness readily follow.

\subsection{Proof of Lemma~\ref{lem:er} (Erd\"os-R\'enyi)}\label{pro:er} From (\ref{equ:quad}), note that for every $S\subset V$ such that $0<|S|<n$, if $\lambda$ is an eigenvalue of matrix $Q_S$, then
$$
\lambda =-\sum_{i\in S^c,j\in S} q_{i,j} x_i^2 -\frac{1}{2}\sum_{i,j\in S^c}q_{i,j} (x_i-x_j)^2\leq -\min_{i\in S^c}\left\{\sum_{j\in S} q_{i,j}\right\}.
$$
since $\sum_{i\in S^c} x_i^2=1$. In the following, we would like to find a value $x_n > 0$ such that $\min_{i\in S^c}\left\{\sum_{j\in S} q_{i,j} \right\} > x_n$ holds with high probability. To this end, we consider the probability that the latter event does not hold, i.e., for $x_n > 0$, we consider
$$
p_e := \P\left(\min_{i\in S^c}\left\{\sum_{j\in S} q_{i,j} \right\} \leq x_n\right).
$$
We first show that the following bound holds $p_e \leq \bar{p}_e$ with
\begin{equation}
\bar{p}_e = 2(1-\alpha)n \exp\left(-(2\alpha-1)(n-1)p_n \varphi\left(\frac{x_n}{2\alpha-1}\right)\right)
\label{equ:barpe}
\end{equation}
where we define $\varphi(x) = x\log(x) + 1 - x$, for $x\geq 0$.\\
To see this, note that for every fixed $\theta > 0$,
\begin{eqnarray*}
p_e =\P\left(\bigcup_{i\in S^c} \left\{\sum_{j\in S} q_{i,j}< x_n\right\}\right)
& \leq & |S^c| \P\left(\sum_{j\in S} q_{i,j}< x_n\right)\\
&\leq & |S^c| e^{\theta x_n} \E\left(e^{-\frac{\theta}{(n-1)p_n}X_{i,j}}\right)^{|S|}\\
&=& |S^c| e^{\theta x_n}\left(1+p_n\left(e^{-\frac{\theta}{(n-1)p_n}}-1\right)\right)^{|S|}
\end{eqnarray*}
where the first inequality follows by the union bound, the second inequality by the Chernoff's inequality, and the third equality by the fact that $X_{i,j}$ is a Bernoulli random variable with mean $p_n$.\\
Since $|S^c|\leq 2(1-\alpha)n$ and $|S|\geq (2\alpha-1)n$, we have
$$
p_e \leq 2(1-\alpha)n e^{\theta x_n}\left(1+p_n\left(e^{-\frac{\theta}{(n-1)p_n}}-1\right)\right)^{(2\alpha-1)n}.
$$
Furthermore, using the fact $1+p_n (e^{-\frac{\theta}{(n-1)p_n}}-1) \leq \exp(p_n (e^{-\frac{\theta}{(n-1)p_n}}-1))$, it follows
$$
p_e \leq 2(1-\alpha)n e^{\theta x_n + (2\alpha-1)np_n \left(e^{-\frac{\theta}{(n-1)p_n}}-1\right)}.
$$
It is straightforward to check that the right-hand side in the last inequality is minimized for $\theta = -(n-1)p_n \log\left(\frac{x_n}{2\alpha - 1}\right)$ and for this value is equal to $\bar{p}_e$ given in (\ref{equ:barpe}).\\
Requiring $\bar{p}_e \leq \frac{1}{n}$ is equivalent to
\begin{eqnarray*}
\varphi\left(\frac{x_n}{2\alpha -1}\right) &\geq & 
\frac{2\log(n) + \log(2(1-\alpha))}
{(2\alpha-1)np_n}.
\end{eqnarray*}
From this it follows that $\lambda \leq -x_n$ holds with high probability provided that $x_n\geq 0$ can be chosen such that
\begin{equation}
\varphi\left(\frac{x_n}{2\alpha -1}\right) \geq 
\frac{2}{c(2\alpha-1)}
\left(
1+\frac{\log(2(1-\alpha))}{2\log(n)}
\right).
\label{equ:suff}
\end{equation}
Such a value $x_n$ exists as $\varphi(x)$ is a decreasing function on $[0,1]$, with boundary values $\varphi(0) = 0$ and $\varphi(1) = 0$, and under our assumption, $c(2\alpha-1) > 2$, the right-hand side in (\ref{equ:suff}) is smaller than $1$ for large enough $n$.\\
Finally, from (\ref{equ:suff}), we note
$$
x_n \geq (2\alpha-1)\varphi^{-1}\left(\frac{2}{c(2\alpha-1)}\right) + O\left(\frac{1}{\log(n)}\right)
$$
from which the asserted result follows. 
\section{Proof of Proposition~\ref{pro:star-phase1}}\label{sec:star-phase1} Let $H(t)$ denote the state of the hub at time $t$. Due to the symmetry of the considered graph, it is not difficult to observe that the dynamics is fully described by a continuous-time Markov process $(H(t), |S_0(t)|, |S_1(t)|, |S_{e_0}(t)|, |S_{e_1}(t)|)_{t\geq 0}$. We need to compute the expected value of the smallest time $t$ such that  $|S_1(t)|=0$, i.e. the time when the state $1$ becomes depleted. To this end, it suffices to consider $(H(t), |S_0(t)|, |S_1(t)|, |S_{e}(t)|)$ where $S_e(t) = S_{e_0}(t) \cup S_{e_1}(t)$, i.e. the system states $e_0$ and $e_1$ are lumped into one state, which we denote with $e$. The system will be said to be in mode $i$ at time $t$, whenever the number of depleted state $1$ nodes before time $t$ is equal to $i$, for $i = 0, 1,\ldots,|S_1(0)|-1$. Notice that if the system is in mode $i$ at time $t$, then $|S_0(t)| = |S_0(0)|-i$, $|S_1(t)|=|S_1(0)|-i$, and $|S_e(t)| = |S_e(0)| + 2i$. For simplicity, we will use the following notation $x_0^i = |S_0(0)|-i$, $x_1^i = |S_1(0)|-i$, and $x_e^i = |S_e(0)| + 2i$, for every given mode $i$.\\
We will compute the expected sojourn time in each of the modes by analyzing a discrete-time Markov chain $\phi^i = (\phi_k^i)_{k\geq 0}$, for given mode $i$, defined as follows. This Markov chain is embedded at time instances at which the hub node interacts with a leaf node. The state space of $\phi^i$ consists of the states $0$, $1$, $e$, $e^*$ with the transition probabilities given in Figure~\ref{fig:trans}. The state of the Markov chain $\phi^i$ indicates the state of the hub at contact instances of the hub with the leaf nodes, where we introduced an extra state $e^*$ to encode the event where the hub is in state $e$ and that this state was reached by the hub from either state $0$ or $1$, thus indicating a depletion of state $1$. Note that the expected duration of mode $0$ is equal to the mean hitting time of state $e^*$ for the Markov chain $\phi^0$ started at either state $0$ or $1$, while the expected duration of mode $i$, for $0 < i < |S_1(0)|$, is equal to the mean hitting time of state $e^*$ for the Markov chain $\phi^i$ started at state $e$. We compute these mean hitting times in the following.
\begin{figure}[t]
\vspace*{0cm}
\begin{center}
\psfig{figure=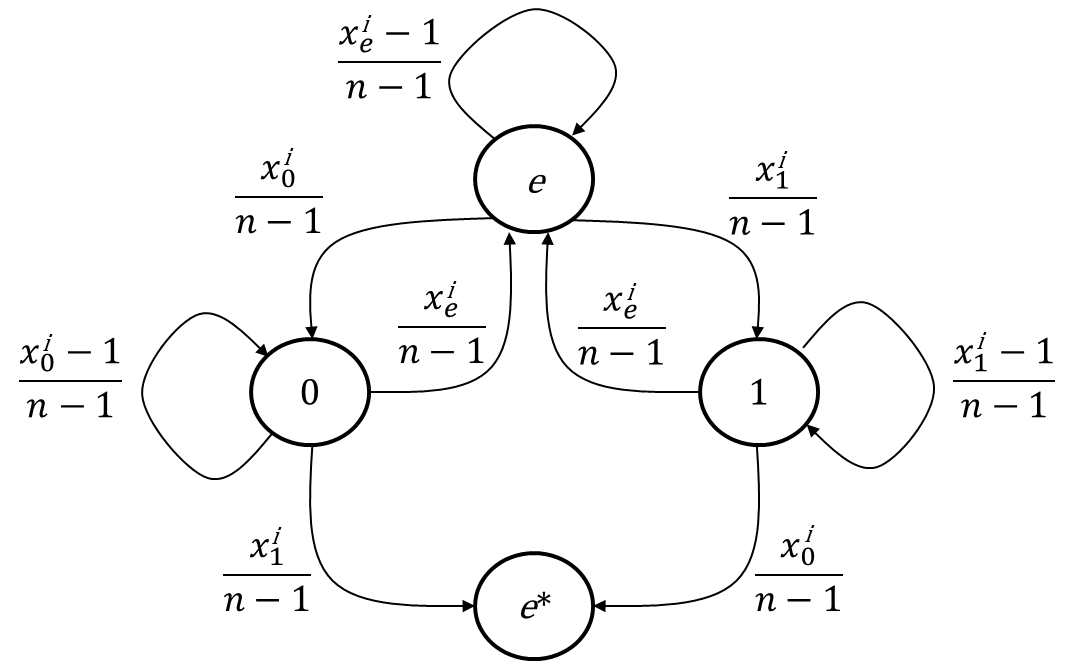,height=2.5in,width=3.5in}
\caption{Star-shaped network: the transition probabilities of $\phi^i$.}
\label{fig:trans}
\end{center} 
\end{figure}
Fix an arbitrary mode $i$, and then let $\varphi_s(i)$ be the mean hitting time of state $e^*$ for the Markov chain $\phi^i$ started at state $s=0$, $1$, and $e$. By the first-step analysis, we have that the latter mean hitting times are the solution of the following system of linear equations
\begin{equation}
\begin{array}{rl}
\varphi_0(i) &= \frac{x_0^i-1}{n-1}\varphi_0(i) +\frac{x_e^i}{n-1}\varphi_e(i) + 1\\ 
\varphi_e(i) &= \frac{x_0^i}{n-1}\varphi_0(i) + \frac{x_e^i-1}{n-1}\varphi_e(i) + \frac{x_1^i}{n-1}\varphi_1(i) + 1\\
\varphi_1(i) &=  \frac{x_e^i}{n-1}\varphi_e(i) + \frac{x_1^i-1}{n-1}\varphi_1(i) +  1. 
\end{array}
\label{equ:lin}
\end{equation}
From this, it is straightforward to derive
\begin{eqnarray}
\varphi_0(i) &=& (n-1)\frac{n^2 x_e^i + x_0^i x_1^i}{x_0^i x_1^i (n-x_0^i)(n+x_e^i)},\label{equ:m0}\\ 
\varphi_1(i) &=& (n-1)\frac{n^2 x_e^i + x_0^i x_1^i}{x_0^i x_1^i (n-x_1^i)(n+x_e^i)},\label{equ:m1}\\
\varphi_e(i) &=& (n-1)\frac{n^2 - x_0^i x_1^i}{x_0^i x_1^i(n+x_e^i)}.\label{equ:me}
\end{eqnarray}
The expected sojourn time in each given mode is as follows. For mode $i = 0$, it holds $x_0^0 = \alpha n$, $x_1^0 = (1-\alpha)n$, $x_e^0 = 0$, and thus, from (\ref{equ:m0}) and (\ref{equ:m1}), the expected sojourn in mode $0$ is $\varphi_0(0)$ and $\varphi_1(0)$, for the initial state of the hub equal to $0$ and $1$, respectively, where 
\begin{equation}
\varphi_0(0) = \frac{n-1}{(1-\alpha)n} \hbox{ and } \varphi_1(0) = \frac{n-1}{\alpha n}.
\label{equ:m0m1}
\end{equation}
On the other hand, for $0<i<|S_1(0)|$, the expected sojourn time in mode $i$ is $\varphi_e(i)$, and from (\ref{equ:me}) and $x_0^i = |S_0(0)|-i$, $x_1^i = |S_1(0)|-i$, $x_e^i = |S_e(0)| + 2i$, we have 
\begin{equation}
\varphi_e(i) = (n-1)\left(\frac{n^2}{(\alpha n - i)((1-\alpha)n-i)(n+2i)}-\frac{1}{n+2i}\right).
\label{equ:mei}
\end{equation}
Finally, the expected duration of phase $1$ is equal to $\varphi_s(0) + \sum_{i=1}^{(1-\alpha)n-1} \varphi_e(i)$, where $s$ denotes the initial state of the hub, either $0$ or $1$. On the one hand, from (\ref{equ:m0m1}), we have that for every fixed $\alpha \in (1/2,1]$, both $\varphi_0(0)$ and $\varphi_1(0)$ are asymptotically constants, as the number of nodes $n$ grows large, thus $\varphi_0(0) = \Theta(1)$ and $\varphi_1(0) = \Theta(1)$. On the other hand, using (\ref{equ:mei}) and some elementary calculus, we obtain
\begin{eqnarray*}
 \sum_{i=1}^{(1-\alpha)n-1} \varphi_e(i)
&=& \frac{n-1}{(2\alpha-1)(3-2\alpha)}H_{(1-\alpha)n-1}  - \frac{n-1}{(2\alpha-1)(1+2\alpha)}
[H_{\alpha n-1}-H_{(2\alpha-1)n}]\\
&& + \left(\frac{2}{(2\alpha-1)(3-2\alpha)} - \frac{2}{(2\alpha-1)(1+2\alpha)}-1\right)
\sum_{i=1}^{(1-\alpha)n-1}\frac{n-1}{n+2i}.
\end{eqnarray*}
where, recall, $H_k=\sum_{i=1}^k \frac{1}{i}$. From this, it can be observed that
\begin{equation}
\sum_{i=1}^{(1-\alpha)n-1} \varphi_e(i) = \frac{1}{(2\alpha-1)(3-2\alpha)} n \log(n) + O(n).
\label{equmas}
\end{equation}
This completes the proof of the proposition.
\end{document}